\documentclass[3p]{elsarticle}
% Packages and macros go here
\usepackage{bm}
\usepackage[parfill]{parskip}
\usepackage[utf8]{inputenc}
\usepackage{url}
\usepackage{amsmath,amssymb,amsfonts,amsthm}
\usepackage{color}
\usepackage{bm}
\usepackage{graphicx}
\usepackage{subcaption}
\usepackage{lipsum}
\usepackage{amsfonts}
\usepackage{graphicx}
\usepackage{epstopdf}
\usepackage{array}
\usepackage[hidelinks]{hyperref}
\usepackage{lineno}
\usepackage{bbm}
\usepackage{xfrac}
\usepackage{varwidth}
\usepackage[nice]{nicefrac}

\usepackage{amsmath}
\usepackage{algorithm}
\usepackage[noend]{algpseudocode}

\ifpdf
  \DeclareGraphicsExtensions{.eps,.pdf,.png,.jpg}
\else
  \DeclareGraphicsExtensions{.eps}
\fi

\newtheorem{theorem}{Theorem}
\newtheorem{lemma}{Lemma}
\newtheorem{definition}{Definition}

\makeatletter
\newsavebox\myboxA
\newsavebox\myboxB
\newlength\mylenA

\newcommand*\xoverline[2][0.75]{%
    \sbox{\myboxA}{$\m@th#2$}%
    \setbox\myboxB\null% Phantom box
    \ht\myboxB=\ht\myboxA%
    \dp\myboxB=\dp\myboxA%
    \wd\myboxB=#1\wd\myboxA% Scale phantom
    \sbox\myboxB{$\m@th\overline{\copy\myboxB}$}%  Overlined phantom
    \setlength\mylenA{\the\wd\myboxA}%   calc width diff
    \addtolength\mylenA{-\the\wd\myboxB}%
    \ifdim\wd\myboxB<\wd\myboxA%
       \rlap{\hskip 0.5\mylenA\usebox\myboxB}{\usebox\myboxA}%
    \else
        \hskip -0.5\mylenA\rlap{\usebox\myboxA}{\hskip 0.5\mylenA\usebox\myboxB}%
    \fi}
\makeatother

\theoremstyle{definition}

\usepackage{tikz}
\usetikzlibrary{chains, positioning, arrows.meta, bending, shapes.arrows}
\usetikzlibrary{positioning,shapes,arrows,calc,shapes.geometric,backgrounds,fit}
\usepackage{pifont}% http://ctan.org/pkg/pifont
\usetikzlibrary{positioning,shapes,arrows,calc,shapes.geometric,backgrounds,fit}

\tikzstyle{block} = [rectangle,draw,minimum width=2em,align=center,rounded corners, minimum height=2em,scale=1.0]
\tikzstyle{blockleft} = [rectangle,draw,minimum width=2em,align=left,rounded corners, minimum height=2em,scale=1.0]
\tikzstyle{bigblock} = [rectangle,draw,minimum width=8em,align=center,rounded corners, minimum height=4em,scale=1.0]
\tikzstyle{connect} = [draw,-latex']
\tikzstyle{decision} = [diamond, draw, 
    text width=4.5em, text badly centered, node distance=3cm, inner sep=0pt]
%\tikzstyle{block} = [rectangle, draw, fill=blue!20, 
%    text width=5em, text centered, rounded corners, minimum height=4em]
\tikzstyle{line} = [draw, -latex']
\tikzstyle{cloud} = [draw, ellipse,fill=red!20, node distance=3cm,
    minimum height=2em]
\tikzstyle{linenoarrow}=[draw]

\makeatletter
\let\save@mathaccent\mathaccent
\newcommand*\if@single[3]{%
  \setbox0\hbox{${\mathaccent"0362{#1}}^H$}%
  \setbox2\hbox{${\mathaccent"0362{\kern0pt#1}}^H$}%
  \ifdim\ht0=\ht2 #3\else #2\fi
  }
%The bar will be moved to the right by a half of \macc@kerna, which is computed by amsmath:
\newcommand*\rel@kern[1]{\kern#1\dimexpr\macc@kerna}
%If there's a superscript following the bar, then no negative kern may follow the bar;
%an additional {} makes sure that the superscript is high enough in this case:
\newcommand*\widebar[1]{\@ifnextchar^{{\wide@bar{#1}{0}}}{\wide@bar{#1}{1}}}
%Use a separate algorithm for single symbols:
\newcommand*\wide@bar[2]{\if@single{#1}{\wide@bar@{#1}{#2}{1}}{\wide@bar@{#1}{#2}{2}}}
\newcommand*\wide@bar@[3]{%
  \begingroup
  \def\mathaccent##1##2{%
%Enable nesting of accents:
    \let\mathaccent\save@mathaccent
%If there's more than a single symbol, use the first character instead (see below):
    \if#32 \let\macc@nucleus\first@char \fi
%Determine the italic correction:
    \setbox\z@\hbox{$\macc@style{\macc@nucleus}_{}$}%
    \setbox\tw@\hbox{$\macc@style{\macc@nucleus}{}_{}$}%
    \dimen@\wd\tw@
    \advance\dimen@-\wd\z@
%Now \dimen@ is the italic correction of the symbol.
    \divide\dimen@ 3
    \@tempdima\wd\tw@
    \advance\@tempdima-\scriptspace
%Now \@tempdima is the width of the symbol.
    \divide\@tempdima 10
    \advance\dimen@-\@tempdima
%Now \dimen@ = (italic correction / 3) - (Breite / 10)
    \ifdim\dimen@>\z@ \dimen@0pt\fi
%The bar will be shortened in the case \dimen@<0 !
    \rel@kern{0.6}\kern-\dimen@
    \if#31
      \overline{\rel@kern{-0.6}\kern\dimen@\macc@nucleus\rel@kern{0.4}\kern\dimen@}%
      \advance\dimen@0.4\dimexpr\macc@kerna
%Place the combined final kern (-\dimen@) if it is >0 or if a superscript follows:
      \let\final@kern#2%
      \ifdim\dimen@<\z@ \let\final@kern1\fi
      \if\final@kern1 \kern-\dimen@\fi
    \else
      \overline{\rel@kern{-0.6}\kern\dimen@#1}%
    \fi
  }%
  \macc@depth\@ne
  \let\math@bgroup\@empty \let\math@egroup\macc@set@skewchar
  \mathsurround\z@ \frozen@everymath{\mathgroup\macc@group\relax}%
  \macc@set@skewchar\relax
  \let\mathaccentV\macc@nested@a
%The following initialises \macc@kerna and calls \mathaccent:
  \if#31
    \macc@nested@a\relax111{#1}%
  \else
%If the argument consists of more than one symbol, and if the first token is
%a letter, use that letter for the computations:
    \def\gobble@till@marker##1\endmarker{}%
    \futurelet\first@char\gobble@till@marker#1\endmarker
    \ifcat\noexpand\first@char A\else
      \def\first@char{}%
    \fi
    \macc@nested@a\relax111{\first@char}%
  \fi
  \endgroup
}
\makeatother

\journal{arXiv.org}

%strongly recommended
%\numberwithin{theorem}{section}

% Declare title and authors, without \thanks
%\newcommand{\TheTitle}{Stability of spatial discretizations for dynamical low-rank approximation} 
\newcommand{\TheTitle}{Energy stable and conservative dynamical low-rank approximation for the Su-Olson problem} 

\date{\today}

% Sets running headers as well as PDF title and authors
%\headers{\RunningTitle}{\TheAuthors}

\usepackage{amsopn}
\DeclareMathOperator{\diag}{diag}
%\DeclareMathOperator{\argmin}{arg\min}

% FundRef data to be entered by SIAM
%<funding-group>
%<award-group>
%<funding-source>
%<named-content content-type="funder-name"> 
%</named-content> 
%<named-content content-type="funder-identifier"> 
%</named-content>
%</funding-source>
%<award-id> </award-id>
%</award-group>
%</funding-group>

%\newcommand{\comment}[1]{{#1}}

\begin{document}
%\linenumbers
\begin{frontmatter}

\title{\TheTitle}

\author[adressWuerzburg]{Lena Baumann}
\author[adressInnsbruck]{Lukas Einkemmer}
\author[adressWuerzburg]{Christian Klingenberg}
\author[adressAs]{Jonas Kusch}

\address[adressWuerzburg]{University of Wuerzburg, Department of Mathematics,  Wuerzburg, Germany, \href{mailto:lena.baumann@uni-wuerzburg.de}{lena.baumann@uni-wuerzburg.de} (Lena Baumann), \href{mailto:klingen@mathematik.uni-wuerzburg.de}{klingen@mathematik.uni-wuerzburg.de} (Christian Klingenberg) }
\address[adressInnsbruck]{University of Innsbruck, Numerical Analysis and Scientific Computing, Innsbruck, Austria, \href{mailto:lukas.einkemmer@uibk.ac.at}{lukas.einkemmer@uibk.ac.at}}
\address[adressAs]{Norwegian University of Life Sciences, Scientific Computing, \r{A}s, Norway, \href{mailto:jonas.kusch@nmbu.no}{jonas.kusch@nmbu.no}}

%% ------------------------------------------------------------------
%% ABSTRACT
%% ------------------------------------------------------------------

\begin{abstract}
Computational methods for thermal radiative transfer problems exhibit high computational costs and a prohibitive memory footprint when the spatial and directional domains are finely resolved. A strategy to reduce such computational costs is dynamical low-rank approximation (DLRA), which represents and evolves the solution on a low-rank manifold, thereby significantly decreasing computational and memory requirements. Efficient discretizations for the DLRA evolution equations need to be carefully constructed to guarantee stability while enabling mass conservation.
In this work, we focus on the Su-Olson closure leading to a linearized internal energy model and derive a stable discretization through an implicit coupling of internal energy and particle density. Moreover, we propose a rank-adaptive strategy to preserve local mass conservation. Numerical results are presented which showcase the accuracy and efficiency of the proposed low-rank method compared to the solution of the full system.
\end{abstract}

\begin{keyword}
thermal radiative transfer, Su-Olson closure, dynamical low-rank approximation, energy stability, mass conservation, rank adaptivity
\end{keyword}

\end{frontmatter}

\section{Introduction}
\label{sec:intro}
Numerically solving the radiative transfer equations is a challenging task, especially due to the high dimensionality of the solution's phase space. A common strategy to tackle this issue is to choose coarse numerical discretizations and mitigate numerical artifacts \cite{lathrop1968ray,mathews1999propagation,morel2003analysis} which arise due to the insufficient resolution, see e.g.~\cite{Camminady2019RayEM,frank2020ray,abu2001angular,lathrop1971remedies,tencer2016ray}. Despite the success of these approaches in a large number of applications, the requirement of picking user-determined and problem dependent tuning parameters can render them impracticable. 
Another approach to deal with the problem's high dimensionality is the use of model order reduction techniques. A reduced order method which is gaining a considerable amount of attention in the field of radiation transport is dynamical low-rank approximation (DLRA) \cite{koch2007dynamical} due to its ability to yield accurate solutions while not requiring an expensive offline training phase. DLRA's core idea is to approximate the solution on a low-rank manifold and evolve it accordingly. Past work in the area of radiative transfer has focused on asymptotic-preserving schemes \cite{einkemmer2021asymptotic,einkemmer2022asymptotic}, mass conservation \cite{peng2021high}, stable discretizations \cite{kusch2023stability}, imposing boundary conditions \cite{kusch2021robust,hu2022adaptive} and implicit time discretizations \cite{peng2023sweep}. A discontinuous Galerkin discretization of the DLRA evolution equations for thermal radiative transfer has been proposed in \cite{marshakDLRA}.

A key building block of efficient, accurate and stable methods for DLRA is the construction of time integrators which are robust irrespective of small singular values in the solution \cite{kieri2016discretized}. Three integrators which move on the low-rank manifold while not being restricted by its curvature are the \emph{projector-splitting} (PS) integrator \cite{lubich2014projector}, the \emph{basis update \& Galerkin} (BUG) integrator \cite{ceruti2022unconventional}, and the \emph{parallel} integrator \cite{ceruti2023parallel}. Since the PS integrator evolves one of the required subflows backward in time, the BUG and parallel integrator are preferable for diffusive problems while facilitating the construction of stable numerical discretization for hyperbolic problems \cite{kusch2023stability}. Moreover, the BUG integrator allows for a basis augmentation step \cite{ceruti2022rank} which can be used to construct conservative schemes for the Schrödinger equation \cite{ceruti2022rank} and the Vlasov–Poisson equations \cite{einkemmer2022robust}.

In this work we consider the thermal radiative transfer equations using the Su-Olson closure. This leads to a linearized internal energy model for which we propose an energy stable and mass conservative DLRA scheme. The main novelties of this paper are:
\begin{itemize}
	\item \textit{A stable numerical scheme for thermal radiative transfer}: We show that a naive IMEX scheme fails to guarantee energy stability. To overcome this unphysical behaviour we propose a scheme which advances radiation and internal energy implicitly in a coupled fashion. In addition, our novel analysis gives a classic hyperbolic CFL condition that enables us to operate up to a time step size of $\Delta t = \text{CFL} \cdot \Delta x$.
    \item \textit{A mass conservative and rank-adaptive integrator}: We employ the basis augmentation step from \cite{ceruti2022rank} as well as an adaption of the conservative truncation strategy from \cite{einkemmer2022robust, guo2022} to guarantee local mass conservation and rank adaptivity. In contrast to \cite{einkemmer2022robust, guo2022} we do not need to impose conservation through a modified $\textit{L}$-step equation, but solely use the basis augmentation strategy from \cite{ceruti2022rank}.
\end{itemize}
Both these properties are extremely important as they ensure key physical principles and allow us to choose an optimal time step size which reduces the computational effort. Moreover,  we demonstrate numerical experiments which underline the derived stability and conservation properties of the proposed low-rank method while showing significantly reduced computational costs and memory requirements compared to the full-order system.

This paper is structured as follows: After the introduction in Section~\ref{sec:intro}, we review the background on thermal radiative transfer and dynamical low-rank approximation in Section~\ref{sec:background}. In Section~\ref{sec:DLRASuOlson} we present the evolution equations for the thermal radiative transfer equations when using the rank-adaptive BUG integrator. Section~\ref{sec:anglespace} discretizes the resulting equations in angle and space. The main method is presented in Section~\ref{sec:time} where a stable time discretization is proposed. We discuss local mass conservation of the scheme in Section~\ref{sec:mass}. Numerical experiments are demonstrated in Section~\ref{sec:num}.

\section{Background}\label{sec:background}
\subsection{Thermal radiative transfer}
In this work, we study radiation particles moving through and interacting with a background material. By absorbing particles, the material heats up and emits new particles which can in turn again interact with the background. This process is described by the thermal radiative transport equations
\begin{align*}
\frac{1}{c}\partial_t f(t,x,\mu) + \mu\partial_x f(t,x,\mu) &= \sigma(B(t,x)-f(t,x,\mu)),\\
\partial_t e(t,x) &= \sigma(\langle f(t,x,\cdot)\rangle_{\mu}-B(t,x)),
\end{align*}
where we omit boundary and initial conditions for now. This system can be solved for the particle density $f(t,x,\mu)$ and the internal energy $e(t,x)$ of the background medium. Here, $x\in D\subset \mathbb{R}$ is the spatial variable and $\mu\in [-1,1]$ denotes the directional (or velocity) variable. The opacity $\sigma$ encodes the rate at which particles are absorbed by the medium and we use brackets $\langle \cdot \rangle_{\mu}, \langle \cdot \rangle_x$ to indicate an integration over the directional domain and the spatial domain, respectively. Moreover, the speed of light is denoted by $c$ and the black body radiation at the material temperature $T$ is denoted by $B(T)$. It often is described by the Stefan-Boltzmann law 
\begin{align*}
    B(T) = acT^4,
\end{align*}
where $a = \frac{4\sigma_{\text{SB}}}{c}$ is the radiation density constant and $\sigma_{\text{SB}}$ the Stefan-Boltzmann constant. Different closures exist to determine a relation between the temperature $T$ and the internal energy $e$. Following the ideas of Pomraning \cite{pomraning1979} and Su and Olson \cite{suolson1997} we assume $e(T) = \alpha B(T)$. Without loss of generality we set $\alpha =1$ and obtain
\begin{subequations}\label{eq:thermalRadOrig}
\begin{align}
\label{eq1a}
\partial_t f(t,x,\mu) + \mu\partial_x f(t,x,\mu) &= \sigma(B(t,x)-f(t,x,\mu)),\\
\label{eq1b}
\partial_t B(t,x) &= \sigma(\langle f(t,x,\cdot)\rangle_{\mu}-B(t,x)).
\end{align}
\end{subequations}
We call this system the Su-Olson problem. It is a linear system for the particle density $f$ and the internal energy $B$ that is analytically solvable and and serves as a common benchmark for numerical considerations \cite{olson2000, mcclarren2008, mcclarren2008-2, mcclarren2008-3}. Note that we leave out the speed of light by doing a rescaling of time $\tau = t/c$ and in an abuse of notation use $t$ to denote $\tau$ in the remainder. Constructing numerical schemes to solve the above equation is challenging. First, the potentially stiff opacity term has to be treated by an implicit time integration scheme. Second, for three-dimensional spatial domains the computational costs and memory requirements of finely resolved spatial and angular discretizations become prohibitive. To tackle the high dimensionality, we choose a dynamical low-rank approximation which we introduce in the following.

\subsection{Dynamical low-rank approximation}\label{sec:recapDLRA}
The core idea of DLRA is to approximate the solution of a given equation $\partial_t f(t,x,\mu) = F(f(t,x,\mu))$ by a representation of the form
\begin{align}\label{eq:appox f DLRA}
    f(t,x,\mu) \approx \sum_{i,j=1}^r X_i(t,x)S_{ij}(t) V_{j}(t, \mu), 
\end{align}
where the orthonormal functions $\{X_i: i=1,...,r\}$ depend only on $t$ and $x$ and the orthonormal functions $\{V_j: j=1,...,r\}$ depend only on $t$ and $\mu$. The number of basis functions is set to $r$ and we call $r$ the rank of this approximation. This terminology stems from the matrix setting for which the concept of DLRA has been introduced \cite{koch2007dynamical}. Then, \eqref{eq:appox f DLRA} can be interpreted as a continuous analogue to the singular value decomposition for matrices. As representation \eqref{eq:appox f DLRA}
is not unique we impose the Gauge conditions $\langle \dot X_i, X_j\rangle_x = 0$ and $ \langle \dot V_i, V_j\rangle_{\mu} = 0$ from which we can conclude that $\{ X_i\}$ and $\{ V_j\}$ are uniquely determined for invertible $\mathbf{S} =(S_{ij})\in\mathbb{R}^{r\times r}$ \cite{koch2007dynamical, einkemmer2021asymptotic, einkemmerlubich2018}. That is, we seek for an approximation of $f$ that for each time $t$ lies in the manifold
\begin{align*}
    \mathcal{M}_r = \Bigg\{&f\in L^2(D\times [-1,1]) : f(\cdot, x,\mu) = \sum_{i,j=1}^r X_i(\cdot, x)S_{ij}(\cdot) V_{j}(\cdot, \mu) \text{ with invertible }\\
 &\mathbf{S} = (S_{ij})\in\mathbb{R}^{r\times r},
 X_i \in L^2(D), V_j\in L^2([-1,1]) \text{ and } \langle X_i, X_j\rangle_x = \delta_{ij}, \langle V_i, V_j\rangle_{\mu} = \delta_{ij} \Bigg\}.
\end{align*}
Note that in the following we denote the full rank and the low-rank solutions as $f$. Let $f(t,\cdot, \cdot)$ be a path on $\mathcal{M}_r$. A formal differentiation of $f$ with respect to $t$ leads to
\begin{align*}
    \dot f(t,\cdot, \cdot) = \sum_{i,j=1}^r \left( \dot X_i(t, \cdot)S_{ij}(t) V_{j}(t, \cdot) + X_i(t, \cdot) \dot S_{ij}(t) V_{j}(t, \cdot)+ X_i(t, \cdot) S_{ij}(t) \dot V_{j}(t, \cdot) \right).
\end{align*}
These functions restrict the solution dynamics onto the low-rank manifold $\mathcal{M}_r$ and constitute the corresponding tangent space which under the Gauge conditions reads
\begin{align*}
    \mathcal{T}_f\mathcal{M}_r = \Bigg\{&\dot f\in L^2(D\times [-1,1]): \dot f(\cdot, x,\mu) = \sum_{i,j=1}^r \Bigl(\dot X_i(\cdot, x)S_{ij}(\cdot) V_{j}(\cdot, \mu) + X_i(\cdot, x)\dot S_{ij}(\cdot) V_{j}(\cdot,\mu)\\
    &\ + X_i(\cdot, x)S_{ij}(\cdot) \dot V_{j}(\cdot, \mu) \Bigr)\\
    &\text{ with } \dot S_{ij} \in\mathbb{R},\dot X_i \in L^2(D), \dot V_j\in L^2([-1,1]) \text{ and } \langle \dot X_i, X_j\rangle_x = 0, \langle \dot V_i, V_j\rangle_{\mu} = 0 \Bigg\}.
\end{align*}
Having defined the low-rank manifold and its corresponding tangent space, we now wish to determine $f(t,\cdot,\cdot)\in\mathcal{M}_r$ such that $\partial_t f(t,\cdot,\cdot) \in\mathcal{T}_f\mathcal{M}_r$ and $\Vert \partial_t f(t,\cdot,\cdot) - F(f(t,\cdot,\cdot))\Vert_{L^2(D\times [-1,1])}$ is minimized. That is, one wishes to determine $f$ such that
\begin{align}\label{eq:variational}
    \langle \partial_t f(t,\cdot,\cdot) - F(f(t,\cdot,\cdot)), \dot f\, \rangle_{x,\mu} = 0 \quad \text{for all }\dot f\in\mathcal{T}_f\mathcal{M}_r.
\end{align}
The orthogonal projector onto the tangent plane $\mathcal{T}_f\mathcal{M}_r$ can be explicitly given as 
\begin{align*}
P(f)F(f) = \sum_{j=1}^r \langle V_j, F(f) \rangle_\mu V_j - \sum_{i,j=1}^r X_i \langle X_i V_j, F(f) \rangle_{x,\mu} V_j + \sum_{i=1}^r X_i \langle X_i, F(f) \rangle_x.
\end{align*}
With this definition at hand, we can reformulate \eqref{eq:variational} as
\begin{align*}
    \partial_t f(t,x,\mu) = P(f(t,x,\mu))F(f(t,x,\mu)).
\end{align*}
To evolve the approximation of the solution in time according to the above equation is not trivial. Indeed standard time integration schemes suffer from the curvature of the low-rank manifold, which is proportional to the smallest singular value of the low-rank solution \cite{koch2007dynamical}. Three integrators which move along the manifold without suffering from its high curvature exist: The projector--splitting integrator \cite{lubich2014projector}, the BUG integrator \cite{ceruti2022unconventional}, and the parallel integrator \cite{ceruti2023parallel}. In this work, we will use the basis-augmented extension to the BUG integrator \cite{ceruti2022rank} which we explain in the following.

The rank-adaptive BUG integrator \cite{ceruti2022rank} updates and augments the bases $\{ X_i\}, \{ V_j\} $ in parallel in the first two steps. In the third step, a Galerkin step is performed for the augmented bases followed by a truncation step to a new rank $r_1$. In detail, to evolve the approximation of the distribution function from $f(t_0,x,\mu) = \sum_{i,j=1}^r X_i^0(x) S_{ij}^0V_j^0(\mu)$ at time $t_0$ to $f(t_1,x,\mu) = \sum_{i,j=1}^{r_1} X_i^1(x) S_{ij}^1V_j^1(\mu)$ at time $t_1 = t_0 + \Delta t$ the integrator performs the following steps:

\textbf{\textit{K}-Step}: Write $K_j(t,x) = \sum_{i=1}^r X_i(t,x) S_{ij}(t)$. Then we obtain $f(t,x,\mu)=\sum_{j=1}^r K_j(t,x) V_j^0(\mu)$ with $\{V_j^0\}$ kept fixed in this step. The basis functions $X_i^0(x)$ with $i=1,...,r$ are updated by solving the partial differential equation
\begin{align*}
\partial_t K_j(t,x) = \left\langle V_j^0, F\left(\sum_{k=1}^r K_{k}(t,x) V_{k}^0\right) \right\rangle_\mu, \quad K_j(t_0,x) = \sum_{i=1}^r X_i^0(x) S_{ij}^0,
\end{align*}
and applying Gram Schmidt to $[K_j(t_1,x), X_i^0]=\sum_{i=1}^{2r} \widehat X_i^1(x) R_{ij}^1$. Then, the updated and augmented basis in physical space consists of $\widehat X_i^1(x)$ with $i=1,...,2r$. Note that $R_{ij}^1$ is discarded after this step. Compute $\widehat M_{ki} = \langle \widehat X_k^1, X_i^0 \rangle_x$.

\textbf{\textit{L}-Step}: Write $L_i(t,\mu) = \sum_{j=1}^r S_{ij}(t)V_j(t,\mu)$. Then we obtain $f(t,x,\mu)=\sum_{i=1}^r X_i^0 L_i(t,\mu)$ with $\{ X_i^0 \}$ kept fixed in this step. The basis functions $V_j^0(\mu)$ with $j=1,...,r$ are updated by solving the partial differential equation
\begin{align*}
\partial_t L_i(t,\mu) = \left\langle X_i^0, F\left(\sum_{\ell=1}^r X_{\ell}^0 L_{\ell}(t,\mu)\right) \right\rangle_x, \quad L_i(t_0, \mu) =\sum_{j=1}^r S_{ij}^0 V_j^0(\mu),
\end{align*}
and applying Gram Schmidt to $[L_i(t_1,\mu), V_j^0(\mu)]=\sum_{j=1}^{2r}  \widehat V_j^1(\mu
) R_{ij}^2$. Then, the updated and augmented basis in velocity space consists of $\widehat V_j^1(\mu)$ with $j=1,...,2r$. Note that $R_{ij}^2$ is discarded after this step. Compute $\widehat N_{\ell j} = \langle \widehat V_\ell^1, V_j^0\rangle_\mu$.

\textbf{\textit{S}-step}: Update $S_{ij}^0$ with $i,j=1,...,r$ to $\widehat S_{ij}^1$ with $i,j=1,...,2r$ by solving the ordinary differential equation
\begin{align*}
\dot{\widehat S}_{ij}(t) = \left\langle \widehat X_i^1 \widehat V_j^1, F\left(\sum_{\ell,k=1}^{2r} \widehat X_{\ell}^1 \widehat S_{\ell k}(t) \widehat V_k^1\right) \right\rangle_{x,\mu}, \quad \widehat S_{ij}(t_0) = \sum_{k, \ell = 1}^r \widehat M_{ik} S_{k\ell}^0 \widehat N_{j\ell}.
\end{align*}

\textbf{Truncation}: Let $\widehat S_{ij}^1$ be the entries of the matrix $\mathbf{\widehat S}^1$. Compute the singular value decomposition of $\mathbf{\widehat S}^1 = \mathbf{\widehat P \widehat \Sigma \widehat Q^\top}$ with $\mathbf{\Sigma} = \text{diag}(\sigma_j)$. Given a tolerance $\vartheta$, choose the new rank $r_1 \leq 2r$ as the minimal number such that 
\begin{align*}
\left(\sum_{j=r_1+1}^{2r} \sigma_j^2\right)^{1/2} \leq \vartheta.
\end{align*}
Let $\mathbf{S}^1$ with entries $S_{ij}^1$ be the $r_1 \times r_1$ diagonal matrix with the $r_1$ largest singular values and let $\mathbf{P}^1$ with entries $P_{ij}^1$ and $\mathbf{Q}^1$ with entries $Q_{ji}^1$ contain the first $r_1$ columns of $\mathbf{\widehat P}$ and $\mathbf{\widehat Q}$, respectively. Set $X_i^1(x) = \sum_{i=1}^{2r} \widehat X_i^1(x)P_{ij}^1$ for $i=1,...,r_1$ and $V_j^1(\mu) = \sum_{j=1}^{2r} \widehat V_j^1(\mu)Q_{ji}^1$ for $j=1,...,r_1$.

The updated approximation of the solution is then given by $f(t_1,x,\mu) = \sum_{i,j=1}^{r_1} X_i^1(x) S_{ij}^1 V_j^1(\mu)$ after one time step. Note that we are not limited to augmenting with the old basis, which we will use to construct our scheme.

\section{Dynamical low-rank approximation for Su-Olson}\label{sec:DLRASuOlson}
Let us now derive the evolution equations of the rank-adaptive BUG integrator for system \eqref{eq:thermalRadOrig}, i.e. the partial differential equations appearing in the \textit{K}- and \textit{L}-step and the ordinary differential equation for the $S$-step. To simplify notation, all derivations are performed for one spatial and one directional variable. However, the derivation trivially extends to higher dimensions. We start with considering the evolution equations for the low-rank approximation of the particle density \eqref{eq1a}. 

\textbf{$\textit{K}$-step:}
Write $K_j(t,x) = \sum_{i=1}^r X_i(t,x) S_{ij}(t)$. Then we have $f (t,x,\mu) = \sum_{j=1}^r K_j(t,x)V_{j}^0(\mu)$ for the low-rank approximation of the solution. Again $\{ V_j^0 \}$ denotes the set of orthonormal basis functions for the velocity space that shall be kept fixed in this step. Inserting this representation of $f$ into \eqref{eq1a} and projecting onto $V_k^0(\mu)$ gives the partial differential equation 
%\begin{align*}
%\langle V^0_k(\mu), \partial_t K_j(t,x)V_{j}^0(\mu)\rangle_{\mu} = -\langle V^0_k(\mu), \mu\partial_x K_j(t,x)V_{j}^0(\mu)\rangle_{\mu} +  \sigma \langle V^0_k(\mu), (B(t,x)-K_j(t,x)V_{j}^0(\mu))\rangle_{\mu}.
%\end{align*}
%Using the orthonormality condition the equation simplifies to
\begin{align}\label{3_Kstep continuous}
\partial_t K_k(t,x) = - \sum_{j=1}^r \partial_x K_j(t,x)\langle V^0_k, \mu V_j^0\rangle_{\mu} +  \sigma \left( B(t,x) \langle V^0_k\rangle_{\mu} - K_k(t,x)\right).
\end{align}

\textbf{$\textit{L}$-step:}
Write $L_i(t,\mu) =  \sum_{j=1}^r S_{ij}(t) V_j(t,\mu)$. Then we have $f(t,x,\mu) = \sum_{i=1}^r X_i^0(x) L_i (t,\mu)$ for the low-rank approximation of the solution. Again $\{X_i^0 \}$ denotes the set of spatial orthonormal basis functions that shall be kept fixed in this step. Inserting this representation of $f$ into \eqref{eq1a} and projecting onto $X_k^0(x)$ yields the partial differential equation 

%\begin{align*}
%\langle X_k^0(x), X_i^0(x) \partial_t L_i(t,\mu) \rangle_x = - \left\langle X_k^0(x), \mu \frac{\mathrm{d}}{\mathrm{d}x} X_i^0(x) L_i(t,\mu)\right\rangle_x + \sigma \langle X_k^0(x), (B(t,x) - X_i^0(x) L_i(t,\mu))\rangle_x.
%\end{align*}
%Simplyfing leads to
\begin{align}\label{3_Lstep continous}
\partial_t L_k(t,\mu) = - \mu \sum_{i=1}^r \left\langle X_k^0, \frac{\mathrm{d}}{\mathrm{d}x} X_i^0\right\rangle_x L_i(t,\mu) + \sigma \left( \langle X_k^0, B(t,\cdot)\rangle_x - L_k(t,\mu)\right).
\end{align}
Lastly, we derive the augmented Galerkin step of the rank-adaptive BUG integrator. We denote the time updated spatial basis augmented with $X_i^0$ as $\widehat X_i^1$. The augmented directional basis $\widehat V_i^1$ is constructed in the corresponding way. Then, the augmented Galerkin step is constructed according to:

\textbf{$\textit{S}$-step:}
We use the initial condition $\widehat S_{ij}(t_0) = \sum_{\ell, k=1}^r \langle \widehat X_i^1  X_{\ell}^0 \rangle_x S_{\ell k}(t_0) \langle \widehat V_j^1  V_{k}^0\rangle_{\mu}$ and approximate the solution $f$ as $f(t,x,\mu) = \sum_{i,j = 1}^{2r} \widehat X_i^1(x) \widehat S_{ij}(t)\widehat V_j^1(\mu).$ Inserting this representation into \eqref{eq1a} and testing against $\widehat X_k^1$ and $\widehat V_\ell^1$ gives the ordinary differential equation
\begin{align}\label{3_Sstep continuous}
\dot{\widehat S}_{k\ell}(t)= &- \sum_{i,j=1}^{2r} \left\langle \widehat X_k^1, \frac{\mathrm{d}}{\mathrm{d}x} \widehat X_i^1 \right\rangle_x \widehat S_{ij}(t) \langle \widehat V_\ell^1, \mu \widehat V_j^1\rangle_\mu + \sigma \left(\langle \widehat X_k^1, B(t,\cdot) \rangle_x \langle \widehat V_\ell^1 \rangle_\mu - \widehat S_{k\ell}(t)\right)
\end{align}
from which we get the augmented quantity $\widehat S_{ij}(t)$. Inserting all augmented low-rank factors into \eqref{eq1b} leads to the partial differential equation
\begin{align}\label{3_ThermalRTE2 with DLRA}
\partial_t B(t,x) &= \sigma \left( \sum_{i,j=1}^{2r} \widehat X_i^1(x) \widehat S_{ij}(t)\langle \widehat V_j^1\rangle_{\mu}-B(t,x)\right).
\end{align}
Before repeating this process and evolving the subequations further in time we truncate back the augmented quantities to a new rank $r_1$ using a suitable truncation strategy.

\section{Angular and spatial discretization}\label{sec:anglespace}

Having derived the $\textit{K}$-, $\textit{L}$- and $\textit{S}$-step of the rank-adaptive BUG integrator, we can now proceed with discretizing in angle and space. For the angular discretization, we use the modal representations
\begin{align*}
V^0_j(\mu) \simeq \sum_{n = 0}^{N-1} V^0_{nj} P_n(\mu), \quad \widehat V^1_j(\mu) \simeq \sum_{n = 0}^{N-1} \widehat V^1_{nj} P_n(\mu), \quad L_i(t,\mu) \simeq \sum_{n = 0}^{N-1} L_{ni}(t) P_n(\mu),
\end{align*}
where $P_n$ are the normalized Legendre polynomials. Note that in the following, we use Einstein's sum convention when not stated otherwise to ensure compactness of notation. Let us define the matrix $\mathbf{A} \in \mathbb{R}^{N\times N}$ with entries $A_{mn} :=\langle P_m, \mu P_n \rangle_{\mu}$. Then we can rewrite $ \langle V^0_k, \mu V_j^0\rangle_{\mu} = V^0_{km}A_{mn}V^0_{jn}$. The evolution equations with angular discretization then read
\begin{subequations}
\label{3_evolution equations angular discretization} 
\begin{align}
\label{3_angular discretization Kstep}
\partial_t K_k(t,x) &= -\partial_x K_j(t,x) V_{nj}^0 A_{mn}  V_{mk}^0 + \sigma \left( B(t,x)  V_{0k}^0 - K_k(t,x)\right), \\
\label{3_angular discretization Lstep}
\dot L_{mk}(t) &= -  \left\langle X_k^0, \frac{\mathrm{d}}{\mathrm{d}x} X_i^0\right\rangle_x L_{ni} (t) A_{mn} + \sigma \left( \langle X_k^0, B(t,\cdot)\rangle_x \delta_{m0} - L_{mk}(t) \right), \\
\label{3_angular discretization Sstep}
\dot{\widehat S}_{k\ell}(t) &= - \left\langle \widehat X_k^1, \frac{\mathrm{d}}{\mathrm{d}x}  \widehat X_i^1 \right\rangle_x S_{ij}(t) \widehat V_{nj}^1 A_{mn}  \widehat V_{m\ell}^1 + \sigma \left( \langle \widehat X_k^1, B(t,\cdot) \rangle_x  \widehat V_{0 \ell}^1  - \widehat S_{k\ell}(t)\right).
\end{align}
For the angular discretization of \eqref{3_ThermalRTE2 with DLRA} we get
\begin{align}
\partial_t B(t,x) &= \sigma \left(\widehat X_i^1(x) \widehat S_{ij}(t) \widehat V_{0j}^1-B(t,x)\right) \label{3_angular discretization ThermalRTE2}.
\end{align}
\end{subequations}

To derive a spatial discretization we choose a spatial grid $x_1 < \cdots < x_{n_x}$ with equidistant spacing $\Delta x$. The solution in a given cell $p$ is then approximated by
\begin{align*}
&X_{pk}(t) \approx \frac{1}{\Delta x} \int_{x_p}^{x_{p+1}} X_k(t,x)\, \mathrm{d}x,
\quad K_{pk}(t) \approx \frac{1}{\Delta x} \int_{x_p}^{x_{p+1}} K_k(t,x)\, \mathrm{d}x,\\
&B_p(t) \approx \frac{1}{\Delta x} \int_{x_p}^{x_{p+1}} B(t,x)\, \mathrm{d}x\;.
\end{align*}
Spatial derivatives are approximated and stabilized through the tridiagonal stencil matrices $\mathbf D^x \approx \partial_x$ and $\mathbf D^{xx} \approx \frac{1}{2} \Delta x\partial_{xx}$ with entries
\begin{align*}
D_{p,p\pm 1}^{x}= \frac{\pm 1}{2\Delta x}\;,\qquad D_{p,p}^{xx}= -\frac{1}{\Delta x}\;, \quad D_{p,p\pm 1}^{xx}= \frac{1}{2\Delta x}\;.
\end{align*}
Applying the matrix $\mathbf D^x \in \mathbb{R}^{n_x \times n_x}$ corresponds to a first order and the stabilization matrix $\mathbf D^{xx} \in \mathbb{R}^{n_x \times n_x}$ to a second order central differencing scheme. Moreover, from now on we assume periodic boundary conditions. Recall the symmetric matrix $\mathbf{A}$. It is diagonalizable in the form $\mathbf{A} = \mathbf{Q}\mathbf{M}\mathbf{Q}^{\top}$ with $\mathbf{Q}$ orthogonal and $\mathbf{M} = \diag(\sigma_1,...,\sigma_n)$. We define matrix $|\mathbf{A}|$ as $|\mathbf{A}| = \mathbf{Q}|\mathbf{M}|\mathbf{Q}^{\top}$. We then obtain the spatially and angular discretized matrix ODEs
\begin{subequations}
\label{3_evolution equations spatial discretization} 
\begin{align}
\label{3_spatial discretization Kstep}
\dot{K}_{pk}(t) = &-D^x_{qp} K_{pj}(t)  V_{nj}^0 A_{mn}  V_{mk}^0 + D^{xx}_{qp} K_{pj}(t)  V_{nj}^0 |A|_{mn} V_{mk}^0\\ \nonumber
 &+ \sigma \left( B_p(t) V_{0k}^0 - K_{pk}(t)\right), \\
\label{3_spatial discretization Lstep}
\dot{L}_{mk}(t) = &- A_{mn}  L_{ni}(t) X_{pi}^0 D_{qp}^x X_{qk}^0 + |A|_{mn} L_{ni}(t) X_{pi}^0 D_{qp}^{xx} X_{qk}^0\\
\nonumber
&+ \sigma \left( \delta_{m0} B_p(t) X_{pk}^0 - L_{mk}(t)\right), \\
\label{3_spatial discretization Sstep}
\dot{\widehat S}_{k\ell}(t) = &- \widehat X_{pk}^1 D^x_{pq} \widehat X_{qi}^1 \widehat S_{ij}(t) \widehat V_{nj}^1 A_{mn} \widehat V_{m\ell}^1 + \widehat X_{pk}^1 D^{xx}_{pq} \widehat X_{qi}^1 \widehat S_{ij}(t) \widehat V_{nj}^1 |A|_{mn} \widehat V_{m\ell}^1\\ \nonumber
 &+ \sigma \left(\widehat X_{pk}^1 B_p(t) \widehat V_{0 \ell}^1  - \widehat S_{k\ell}(t)\right).
\end{align}
Lastly, we obtain from \eqref{3_angular discretization ThermalRTE2} for the internal energy $B$ the spatially discretized equation
\begin{align}\label{3_spatial discretization B}
\dot B_p(t) &= \sigma \left(\widehat X_{pi}^1 \widehat S_{ij}(t) \widehat V_{0j}^1-B_p(t)\right) = \sigma \left(u_{p0}^1(t)-B_p(t)\right),
\end{align}
\end{subequations}
where we use the notation $\widehat X_{pi}^1 \widehat S_{ij}(t) \widehat V_{mj}^1 =: u_{pm}^1(t)$. We can now show that the semi-discrete time-dependent system \eqref{3_evolution equations spatial discretization} is energy stable. For this, let us first give a definition of the total energy of the system:

\begin{definition}[Total energy]
    Let the matrix $\mathbf u^1(t) \in \mathbb{R}^{n_x \times N}$ with low-rank entries $u_{pm}^1(t) = \widehat X_{pi}^1 \widehat S_{ij}(t) \widehat V_{mj}^1$ denote the angularly and spatially discretized approximation of the solution of \eqref{eq1a} and $\mathbf{B}(t) \in \mathbb{R}^{n_x}$ be the spatially discretized approximation of the solution of \eqref{eq1b}. Then we call
   \begin{align*}
    E(t) := \frac{1}{2}\Vert \mathbf{u}^1(t) \Vert_F^2 + \frac{1}{2} \Vert \mathbf{B}(t) \Vert_E^2,
    \end{align*}
    with $\Vert \cdot \Vert_F$ denoting the Frobenius and $\Vert \cdot \Vert_E$ denoting the Euclidean norm,
    the \textit{total energy} of the system \eqref{3_evolution equations spatial discretization}.
\end{definition}
Further, we note the following properties of the chosen spatial stencil matrices which we write down denoting all sums explicitly:
\begin{lemma}[Summation by parts]
Let $y,z \in \mathbb{R}^{n_x}$ with indices $p,q=1,...,n_x$. In addition, we set $y_0 = y_{n_x}$ and $y_{n+1} = y_1$, for $z$ respectively, due to the periodic boundary conditions. Then the stencil matrices fulfill the following properties:
\begin{align*}
\sum_{p,q=1}^{n_x} y_p D_{pq}^x z_q = -\sum_{p,q=1}^{n_x} z_p D_{pq}^{x} y_q, \hspace{0.2cm} \sum_{p,q=1}^{n_x} z_p D_{pq}^{x} z_q = 0, \hspace{0.2cm} \sum_{p,q=1}^{n_x} y_p D_{pq}^{xx} z_q = \sum_{p,q=1}^{n_x} z_p D_{pq}^{xx} y_q.
\end{align*}
Moreover, let $\mathbf D^{+}\in\mathbb{R}^{n_x \times n_x}$ be defined as
\begin{align*}
D_{p,p}^{+}= \frac{- 1}{\sqrt{2\Delta x}}\;,\qquad D_{p,p + 1}^{+}= \frac{ 1}{\sqrt{2\Delta x}}\;.
\end{align*}
Then, $\sum_{p,q =1}^{n_x} z_p D_{pq}^{xx} z_q = - \sum_{p=1}^{n_x} \left(\sum_{q=1}^{n_x} D_{pq}^+ z_q\right)^2$.
\end{lemma}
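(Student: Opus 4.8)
The plan is to reduce all four identities to elementary algebraic properties of the circulant stencil matrices under the periodic boundary conditions, so that no genuine boundary-term manipulation is required.

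First, for the three identities in the opening display I would establish the (anti)symmetry of the stencils directly at the level of their entries. Reading off the definitions, $D^x_{p,p+1} = \tfrac{1}{2\Delta x}$ and $D^x_{p,p-1} = \tfrac{-1}{2\Delta x}$ while $D^x_{p,p}=0$; interpreting the column indices modulo $n_x$ (this is exactly where the conditions $y_0=y_{n_x}$ and $y_{n_x+1}=y_1$ enter), one checks that $D^x_{pq} = -D^x_{qp}$ for all $p,q$, including the wrap-around pair $(1,n_x)$, so that $\mathbf{D}^x$ is skew-symmetric. Substituting $D^x_{pq}=-D^x_{qp}$ into $\sum_{p,q} y_p D^x_{pq} z_q$ and relabeling the summation indices yields the first identity at once, and the second is simply its specialization $y=z$, which forces the quadratic form to equal its own negative and hence to vanish. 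The same inspection gives $D^{xx}_{p,p+1}=D^{xx}_{p,p-1}=\tfrac{1}{2\Delta x}$ and $D^{xx}_{p,p}=-\tfrac{1}{\Delta x}$, so $D^{xx}_{pq}=D^{xx}_{qp}$ and $\mathbf{D}^{xx}$ is symmetric; the third identity then follows by the identical relabeling argument.

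The fourth identity is the one carrying real content, and I would prove it through the factorization $\mathbf{D}^{xx} = -(\mathbf{D}^+)^\top \mathbf{D}^+$. Computing $((\mathbf{D}^+)^\top \mathbf{D}^+)_{pq} = \sum_r D^+_{rp} D^+_{rq}$ and noting that $D^+_{rp}$ is nonzero only for $r\in\{p-1,p\}$, the diagonal entry collects the two contributions $\tfrac{1}{2\Delta x}+\tfrac{1}{2\Delta x}=\tfrac{1}{\Delta x}$, while each nearest off-diagonal entry picks up the single surviving cross term $\tfrac{-1}{\sqrt{2\Delta x}}\cdot\tfrac{1}{\sqrt{2\Delta x}}=\tfrac{-1}{2\Delta x}$. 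Comparing with the entries of $\mathbf{D}^{xx}$ confirms $\mathbf{D}^{xx}=-(\mathbf{D}^+)^\top \mathbf{D}^+$, and the stated identity is then immediate since $\sum_{p,q} z_p D^{xx}_{pq} z_q = -(\mathbf{D}^+ z)^\top(\mathbf{D}^+ z) = -\sum_p\bigl(\sum_q D^+_{pq}z_q\bigr)^2$.

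I expect the only delicate point to be the consistent treatment of the periodic indexing in this factorization: one must verify that the cross terms at the wrap-around positions $(1,n_x)$ and $(n_x,1)$ are reproduced correctly by $(\mathbf{D}^+)^\top\mathbf{D}^+$, i.e. that $\mathbf{D}^+$ is itself the intended circulant forward difference whose boundary row closes the periodic loop. Once this bookkeeping is pinned down, every remaining step is a routine entrywise comparison.
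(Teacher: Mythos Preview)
Your proposal is correct and follows essentially the same approach as the paper: the paper also plugs in the stencil definitions and reindexes the sums to exhibit the (anti)symmetry of $\mathbf{D}^x$ and $\mathbf{D}^{xx}$, and for the last identity it completes the square to $-\tfrac{1}{2\Delta x}\sum_p (z_p-z_{p+1})^2$, which is precisely your factorization $\mathbf{D}^{xx}=-(\mathbf{D}^+)^{\top}\mathbf{D}^+$ written out componentwise. The only cosmetic difference is that you phrase the argument at the matrix level while the paper stays with explicit index sums throughout.
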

\begin{proof}
The assertions follow directly by plugging in the definitions of the stencil matrices and rearranging the sums of the products in an adequate way:
\begin{align*}
\sum_{p,q=1}^{n_x} y_p D_{pq}^x z_q =& \ \frac{1}{2\Delta x} \sum_{p=1}^{n_x} y_p \left( z_{p+1} - z_{p-1} \right) = - \frac{1}{2\Delta x} \sum_{p=1}^{n_x} z_p \left( y_{p+1} - y_{p-1} \right)\\
=& \ -\sum_{p,q=1}^{n_x} z_p D_{pq}^x y_q, \\
\sum_{p,q=1}^{n_x} z_p D_{pq}^{x} z_q  =& - \sum_{p,q=1}^{n_x} z_p D_{pq}^{x} z_q = 0,
\end{align*}
\begin{align*}
\sum_{p,q=1}^{n_x} y_p D_{pq}^{xx} z_q =& \ - \frac{1}{\Delta x} \sum_{p=1}^{n_x} y_p z_p  + \frac{1}{2\Delta x} \sum_{p=1}^{n_x} y_p (z_{p+1} + z_{p-1})\\
=& \ - \frac{1}{\Delta x} \sum_{p=1}^{n_x} z_p y_p+ \frac{1}{2\Delta x} \sum_{p=1}^{n_x} z_p (y_{p+1} + y_{p-1})= \sum_{p,q=1}^{n_x} z_p D_{pq}^{xx} y_q,\\
\sum_{p,q=1}^{n_x} z_p D_{pq}^{xx} z_q =& \ - \frac{1}{\Delta x} \sum_{p=1}^{n_x} z_p^2 + \frac{1}{2\Delta x} \sum_{p=1}^{n_x} z_p (z_{p+1} + z_{p-1})\\
=& \ - \frac{1}{2\Delta x} \sum_{p=1}^{n_x} \left( z_p^2 - 2 z_p z_{p+1} + z_{p+1}^2\right)
= \ - \frac{1}{2\Delta x} \sum_{p=1}^{n_x} \left( z_p - z_{p+1}\right)^2\\
=& \ - \sum_{p=1}^{n_x} \left( \sum_{q=1}^{n_x} D_{pq}^+ z_q \right)^2.\\
\end{align*}
\end{proof}

With these properties at hand, we can now show dissipation of the total energy:
\begin{theorem}\label{th:timecont}
The semi-discrete time-continuous system consisting of (\ref{3_evolution equations spatial discretization}) is energy stable, that is $\dot{E}(t) \leq 0$.
\end{theorem}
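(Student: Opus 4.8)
The plan is to differentiate $E(t)$ along the semi-discrete flow \eqref{3_evolution equations spatial discretization} and split $\dot E$ into an advection part, a stabilization part, and a coupling part, showing that the first vanishes and the other two are nonpositive. The first move is to exploit that during the $S$-step the augmented bases $\widehat X_i^1$ and $\widehat V_j^1$ are held fixed, so that $u_{pm}^1=\widehat X_{pi}^1\widehat S_{ij}\widehat V_{mj}^1$ satisfies $\dot u_{ab}^1=\widehat X_{ak}^1\dot{\widehat S}_{k\ell}\widehat V_{b\ell}^1$. Substituting the $S$-step equation \eqref{3_spatial discretization Sstep} and contracting against $\widehat X_{ak}^1\widehat V_{b\ell}^1$, the orthonormality relations $\widehat X_{ak}^1\widehat X_{pk}^1=\delta_{ap}$ and $\widehat V_{b\ell}^1\widehat V_{m\ell}^1=\delta_{bm}$ collapse the projections and yield a closed evolution equation for the reconstructed solution,
\begin{align*}
\dot u_{ab}^1 = -D^x_{aq}u_{qn}^1 A_{bn} + D^{xx}_{aq}u_{qn}^1|A|_{bn} + \sigma\left(B_a\delta_{b0}-u_{ab}^1\right).
\end{align*}
Since the bases are orthonormal, $\|\mathbf u^1\|_F^2=\|\widehat{\mathbf S}\|_F^2$ and hence $\tfrac12\tfrac{d}{dt}\|\mathbf u^1\|_F^2=\sum_{ab}u_{ab}^1\dot u_{ab}^1$; likewise $\tfrac12\tfrac{d}{dt}\|\mathbf B\|_E^2=\sum_p B_p\dot B_p$ from \eqref{3_spatial discretization B}. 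Adding these gives $\dot E$ as a sum of the three groups of terms above.

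For the advection term $-\sum_{ab}u_{ab}^1 D^x_{aq}u_{qn}^1 A_{bn}=-\mathrm{tr}\big((\mathbf u^1)^\top\mathbf D^x\mathbf u^1\mathbf A\big)$ I would use the diagonalization $\mathbf A=\mathbf Q\mathbf M\mathbf Q^\top$ with $\mathbf M=\diag(\sigma_j)$ and set $\mathbf w=\mathbf u^1\mathbf Q$. Orthogonality of $\mathbf Q$ reduces this to $-\sum_j\sigma_j\sum_{p,q}w_{pj}D^x_{pq}w_{qj}$, which vanishes term by term because the second identity of the Summation-by-parts Lemma gives $\sum_{p,q}z_p D^x_{pq}z_q=0$ for every column $z=w_{\cdot j}$. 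The same change of variables handles the stabilization term $\mathrm{tr}\big((\mathbf u^1)^\top\mathbf D^{xx}\mathbf u^1|\mathbf A|\big)=\sum_j|\sigma_j|\sum_{p,q}w_{pj}D^{xx}_{pq}w_{qj}$; here the last identity of the lemma rewrites each inner sum as $-\sum_p(\sum_q D^+_{pq}w_{qj})^2\le 0$, and since $|\sigma_j|\ge 0$ the whole stabilization contribution is nonpositive.

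It remains to treat the coupling. The source part of $\tfrac12\tfrac{d}{dt}\|\mathbf u^1\|_F^2$ equals $\sigma(\sum_p u_{p0}^1 B_p-\|\mathbf u^1\|_F^2)$, while $\tfrac12\tfrac{d}{dt}\|\mathbf B\|_E^2=\sigma(\sum_p B_p u_{p0}^1-\|\mathbf B\|_E^2)$, and the two cross terms coincide. Collecting them yields $\sigma(2\sum_p u_{p0}^1 B_p-\|\mathbf u^1\|_F^2-\|\mathbf B\|_E^2)$. The key observation is that the Frobenius norm dominates its zeroth angular mode, $\|\mathbf u^1\|_F^2\ge\sum_p(u_{p0}^1)^2$, so after completing the square this expression is bounded above by $-\sigma\sum_p(u_{p0}^1-B_p)^2\le 0$. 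Combining the three bounds gives $\dot E(t)\le 0$.

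I expect the main obstacle to be organizing the advection and stabilization terms so that the Summation-by-parts Lemma becomes applicable: the trick is the simultaneous diagonalization of $\mathbf A$ and $|\mathbf A|$ by the single orthogonal matrix $\mathbf Q$, which decouples the angular modes and turns the angular coupling into a per-column scalar weight $\sigma_j$ (respectively $|\sigma_j|$), after which the quadratic-form identities apply column by column. A secondary subtlety is the sign of the coupling term, which only closes because the full Frobenius norm controls the zeroth-mode contribution, allowing the two cross terms to be absorbed into a single perfect square.
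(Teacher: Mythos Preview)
Your overall strategy and the treatment of the advection, stabilization, and coupling terms via the diagonalization $\mathbf A=\mathbf Q\mathbf M\mathbf Q^\top$ together with the summation-by-parts identities is exactly the paper's approach, and the final inequality you obtain is correct. There is, however, one incorrect step that you should fix: the claimed ``orthonormality relations'' $\widehat X_{ak}^1\widehat X_{pk}^1=\delta_{ap}$ and $\widehat V_{b\ell}^1\widehat V_{m\ell}^1=\delta_{bm}$ are false. Since $\widehat{\mathbf X}^1\in\mathbb R^{n_x\times 2r}$ has orthonormal \emph{columns} but $2r<n_x$, the sum $\sum_k\widehat X_{ak}^1\widehat X_{pk}^1=(\widehat{\mathbf X}^1\widehat{\mathbf X}^{1,\top})_{ap}=:P^{X,1}_{ap}$ is only a rank-$2r$ orthogonal projection, and similarly for $P^{V,1}$. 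Your closed evolution equation for $\dot u_{ab}^1$ is therefore not valid as written; the correct equation retains $P^{X,1}$ and $P^{V,1}$ on the right-hand side.

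The repair is precisely what the paper does: keep the projections in the $\dot u^1$ equation and only eliminate them \emph{after} multiplying by $u_{ab}^1$ and summing over $a,b$. Because $\mathbf u^1=\widehat{\mathbf X}^1\widehat{\mathbf S}\,\widehat{\mathbf V}^{1,\top}$ lies in the range of both projections, one has $\sum_a P^{X,1}_{ap}u_{ab}^1=u_{pb}^1$ and $\sum_b P^{V,1}_{mb}u_{pb}^1=u_{pm}^1$, so the projections drop out in the quadratic energy identity and you recover exactly the expression you wrote for $\tfrac12\tfrac{d}{dt}\|\mathbf u^1\|_F^2$. With this correction your argument goes through verbatim and coincides with the paper's proof.
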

\begin{proof}
Let us start from the $\textit{S}$-step in \eqref{3_spatial discretization Sstep}
\begin{align*}
\dot{\widehat S}_{k\ell}(t) = &- \widehat X_{pk}^1 D^x_{pq} \widehat X_{qi}^1 \widehat S_{ij}(t) \widehat V^1_{nj} A_{mn} \widehat V^1_{m\ell} + \widehat X_{pk}^1 D^{xx}_{pq} \widehat X_{qi}^1 \widehat S_{ij}(t) \widehat V^1_{nj} |A|_{mn} \widehat V^1_{m\ell}\\
 &+ \sigma \left( \widehat X_{pk}^1 (x) B_p(t) \widehat V_{0 \ell}^1  - \widehat S_{k\ell}(t)\right).
\end{align*}
We multiply with $\widehat X_{\alpha k}^1 \widehat V_{\beta \ell}^1$, where $\alpha=1,...,n_x$ and $\beta = 0,...,N-1$, sum over $k$ and $\ell$ and introduce the projections $P^{X,1}_{\alpha p} = \widehat X_{\alpha k}^1 \widehat X_{pk}^1$ and $P^{V,1}_{m \beta} = \widehat V_{m \ell}^1 \widehat V_{\beta \ell}^1$. With the notation $\widehat X_{qi}^1 \widehat S_{ij}(t) \widehat V^1_{nj} = u_{qn}^1(t)$ we get
\begin{align*}
\dot u_{\alpha \beta}^1(t)= &- P^{X,1}_{\alpha p} D^x_{pq} u_{qn}^1(t) A_{mn} P^{V,1}_{m \beta} +P^{X,1}_{\alpha p} D^{xx}_{pq} u_{qn}^1(t) |A|_{mn} P^{V,1}_{m \beta}\\
&+ \sigma \left( P^{X,1}_{\alpha p} B_p(t) \delta_{0m} P^{V,1}_{m \beta} - u_{\alpha \beta}^1(t)\right).
\end{align*}
Next, we multiply with $u_{\alpha \beta}^1(t)$ and sum over $\alpha$ and $\beta$. Note that 
\begin{align*}
P^{X,1}_{\alpha p} u_{\alpha \beta}^1 (t) = u_{p \beta}^1 (t) \quad \text{ and } \quad P^{V,1}_{m \beta} u_{p \beta}^1 (t) = u_{pm}^1 (t).
\end{align*}
This leads to
\begin{align*}
\frac{1}{2}\frac{\mathrm{d}}{\mathrm{d}t} \Vert \mathbf{u}^1(t) \Vert^2_F = &- u_{pm}^1(t) D^x_{pq} u_{qn}^1(t) A_{mn} + u_{pm}^1(t) D^{xx}_{pq} u_{qn}^1(t) |A|_{mn}\\
&+  \sigma \left( u_{pm}^1(t) B_p(t) \delta_{0m} - \Vert \mathbf{u}^1(t) \Vert^2 \right).
\end{align*}
Recall that we can write $\mathbf{A} = \mathbf{QMQ}^\top$ with $\mathbf{M} = \diag(\sigma_1,...,\sigma_N)$. Inserting this representation gives
\begin{align*}
\frac{1}{2}\frac{\mathrm{d}}{\mathrm{d}t} \Vert \mathbf{u}^1(t) \Vert^2_F =& - u_{pm}^1(t) D^x_{pq} u_{qn}^1(t) Q_{nk}\sigma_k Q_{mk} + u_{pm}^1(t) D^{xx}_{pq} u_{qn}^1(t) Q_{nk}|\sigma_k| Q_{mk}\\
&+  \left( u_{pm}^1(t) B_p(t) \delta_{0m} - \Vert \mathbf{u}^1(t) \Vert^2 \right)\\
=& - \sigma_k \widetilde u_{pk}^1(t) D^x_{pq} \widetilde u_{qk}^1(t) + |\sigma_k| \widetilde u_{pk}^1(t) D^{xx}_{pq} \widetilde u_{qk}^1(t)\\ &+  \left( u_{pm}^1(t) B_p(t) \delta_{0m} - \Vert \mathbf{u}^1(t) \Vert^2 \right),
\end{align*}
where $\widetilde u_{pk}^1(t) = u_{pm}^1(t)Q_{mk}$.
With the properties of the stencil matrices we get
\begin{align}\label{3_Energystability-u}
\frac{1}{2}\frac{\mathrm{d}}{\mathrm{d}t} \Vert \mathbf{u}^1(t) \Vert^2_F =&  -\left(D^+_{pq} u_{qm}^1(t) |A|_{mn}^{1/2} \right)^2 + \sigma \left(u_{p0}(t) B_p(t) - \Vert \mathbf{u}^1(t) \Vert^2_F\right).
\end{align}
Next we consider equation \eqref{3_spatial discretization B}. Multiplication with $B_p(t)$ and summation over $p$ gives
\begin{align}\label{3_Energystability-B}
\frac{1}{2}\frac{\mathrm{d}}{\mathrm{d}t}  \Vert \mathbf{B}(t)\Vert^2_E &= \sigma \left(u_{p0}(t)B_p(t)-\Vert \mathbf{B}(t)\Vert^2_E\right).
\end{align}
For the total energy of the system it holds that $E(t)= \frac{1}{2} \Vert \mathbf{u}^1(t) \Vert_F^2 + \frac{1}{2} \Vert \mathbf{B}(t)\Vert^2_E $. Adding the evolution equations \eqref{3_Energystability-u} and \eqref{3_Energystability-B} we get
\begin{align*}
\frac{\mathrm{d}}{\mathrm{d}t} E(t) =& - \left( \widetilde D^+_{pq} u_{qm}^1(t) |A|_{mn}^{1/2} \right)^2 + \sigma \left(u_{p0}^1(t) B_p(t) - \Vert \mathbf{u}^1(t) \Vert_F ^2\right)\\
&+ \sigma \left(u_{p0}^1(t)B_p(t)-\Vert \mathbf{B}(t)\Vert_E^2 \right)\\
=& - \left(\widetilde D^+_{pq} u_{qm}^1(t) |A|_{mn}^{1/2} \right)^2 - \sigma \left( (u_{p0}^1(t)-B_p(t))^2 + (u_{pm}^1(t))^2 (1-\delta_{m0})\right),
\end{align*}
where we rewrote $\Vert \mathbf{B}(t) \Vert_E^2 = B_{p}(t)^2$ and $\Vert \mathbf{u}^1(t) \Vert^2_F = (u_{pm}^1(t))^2$. This expression is strictly negative which means that $E$ is dissipated in time. Hence, the system is energy stable.
\end{proof}

\section{Time discretization}\label{sec:time}
Our goal is to construct a conservative DLRA scheme which is energy stable under a sharp time step restriction. Constructing time discretization schemes which preserve the energy dissipation shown in Theorem~\ref{th:timecont} while not suffering from the potentially stiff opacity term is not trivial. In fact a naive IMEX time discretization potentially will increase the total energy, which we demonstrate in the following.

\subsection{Naive time discretization}
We start from system \eqref{3_evolution equations spatial discretization} which still depends continuously on the time $t$. For the time discretization we choose a naive IMEX Euler scheme where we perform a splitting of nternal energy and radiation transport equation. That is, we use an explicit Euler step for the transport part of the evolution equations, treat the internal energy $B$ explicitly and use an implicit Euler step for the radiation absorption term. Note that the scheme describes the evolution from time $t_0$ to time $t_1 = t_0 +\Delta t$ but holds for all further time steps equivalently. This yields the fully discrete scheme
\begin{subequations}\label{eq:naivescheme}
\begin{align}
K_{pk}^1 =&\, K_{pk}^0 -\Delta t D^x_{qp} K_{pj}^0 V_{nj}^0 A_{mn} V_{mk}^0 + \Delta t D^{xx}_{qp} K_{pj}^0 V_{nj}^0 |A|_{mn}  V_{mk}^0 \\\nonumber
&+ \sigma \left( \Delta t B_p^0 V_{0k}^0 - \Delta t K_{pk}^1 \right), \\
L_{mk}^1 =&\, L_{mk}^0 -  \Delta t X_{qk}^0 D_{qp}^x X_{pi}^0 L_{ni}^0 A_{mn} + \Delta t X_{qk}^0 D_{qp}^{xx} X_{pi}^0 L_{ni}^0 |A|_{mn}\\\nonumber
&+ \sigma \left(\Delta t X_{pk}^0 B_p^0 \delta_{m0} - \Delta t L_{mk}^1 \right).
\end{align}
We perform a QR-decomposition of the quantities $[K_{pk}^1, X_{pk}^0]$ and $[ L_{pk}^1, V_{pk}^0]$ to obtain the augmented and time updated bases $\widehat X_{pk}^1$ and $\widehat V_{pk}^1$ according to the rank-adaptive BUG integrator \cite{ceruti2022rank}. Lastly, we perform a Galerkin step for the augmented bases according to
\begin{align}\label{eq:Snaive}
\widehat S_{k\ell}^1 = \widetilde S_{k\ell}^0 &- \Delta t \widehat X_{pk}^1 D^x_{pq} \widehat X_{qi}^1 \widetilde S_{ij}^0  \widehat V_{nj}^1 A_{mn}  \widehat V_{m\ell}^1 + \Delta t \widehat X_{pk}^1 D^{xx}_{pq} \widehat X_{qi}^1 \widetilde S_{ij}^0 \widehat V_{nj}^1 |A|_{mn} \widehat V_{m\ell}^1 \\ \nonumber
&+ \sigma \left(\Delta t \widehat X_{pk}^1 B_p^0 \widehat V_{0 \ell}^1  - \Delta t \widehat S_{k\ell}^1 \right),
\end{align}
where $\widetilde S_{k\ell}^0 := \widehat X_{pk}^1 X_{pi}^0 S_{ij}^0 V_{nj}^0 \widehat V_{n\ell}^1$. The internal energy is then updated via
\begin{align}
B_p^1 &= B_p^0 + \sigma \Delta t \left(\widehat X_{pi}^1 \widehat S_{ij}^1 \widehat V_{0j}^1-B_p^1\right).\label{eq:naiveschemeB}
\end{align}
\end{subequations}

However, this numerical method has the undesirable property that it can increase the total energy during a time step. In Theorem \ref{theorem:naive} we show this analytically.  This behavior is, obviously, completely unphysical. 
% We also note that in numerical experiments we observe that...

\begin{theorem}\label{theorem:naive}
Let $\mathbf{u}^0 \in \mathbb{R}^{n_x \times N}$ with entries $u^0_{pm} = X^0_{pk} S^0_{k \ell} V^0_{m\ell}$ denote the angularly and spatially discretized low-rank approximation of the function $f$ at time $t=t_0$, and $\mathbf{u}^1 \in \mathbb{R}^{n_x \times N}$ with entries $u^1_{\alpha \beta} = \widehat X^1_{\alpha k} \widehat S^1_{k \ell} \widehat V^1_{\beta \ell}$ denote the basis augmented angularly and spatially discretized low-rank approximation at time $t=t_1$ using the rank-adaptive BUG integrator. Further, $\mathbf{B}^0 \in \mathbb{R}^{n_x}$ shall denote the spatially discretized low-rank approximation of $B$ at time $t=t_0$, and $\mathbf{B}^1 \in \mathbb{R}^{n_x}$ at time $t=t_1$, respectively. The total energy at time $t=t_0$ is denoted by $E^0$ and $E^1$ at time $t=t_1$, respectively. Then, there exist initial value pairs $({\mathbf u}^0, \mathbf{B}^0)$ and time step sizes $\Delta t$ such that the naive scheme \eqref{eq:naivescheme} results in $(\mathbf{u}^1, \mathbf{B}^1)$ for which the total energy increases, i.e. for which $E^1 > E^0$.
\end{theorem}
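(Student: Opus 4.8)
The statement is a counterexample (existence) claim, so the plan is to exhibit one explicit initial pair $(\mathbf u^0,\mathbf B^0)$ together with a time step $\Delta t$ for which a direct computation gives $E^1>E^0$, rather than to estimate $E^1-E^0$ in general. The guiding idea is that the loss of stability must come from the staggered, Gauss--Seidel-type coupling in \eqref{eq:naivescheme}: the $S$-step \eqref{eq:Snaive} feeds the \emph{old} energy $B_p^0$ into the radiation update, whereas the energy update \eqref{eq:naiveschemeB} is driven by the \emph{new} zeroth moment $u_{p0}^1$. To isolate this mechanism from the transport contributions I would pick data that is spatially constant and isotropic. Then the spatial stencils $\mathbf D^x,\mathbf D^{xx}$ annihilate the state and the isotropic flux $\langle P_0,\mu P_0\rangle$ vanishes, so \emph{every} transport term in \eqref{eq:naivescheme} and \eqref{eq:Snaive} drops out and the scheme degenerates to pure relaxation. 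This also makes the point sharper: the energy growth is a genuine failure of the time coupling, not an artifact of an over-large hyperbolic step.

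Concretely, I would take a rank-one radiation state that is spatially constant and isotropic, i.e. proportional to $P_0$, with a small amplitude $\delta$ so that $\mathbf S^0$ stays invertible and $(\mathbf u^0,\mathbf B^0)\in\mathcal M_r$, together with a spatially constant energy $\mathbf B^0$ of order one. The first task is to check that the basis-augmented BUG update really collapses to a scalar system. Since the dynamics never leave the one-dimensional constant spatial space and the $P_0$ angular mode (the source term $\sigma\Delta t\,\widehat X_{pk}^1 B_p^0 \widehat V_{0\ell}^1$ only excites $P_0$), the projections $P^{X,1}$ and $P^{V,1}$ introduced in the proof of Theorem~\ref{th:timecont} act as the identity on the active subspace. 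Tracking this through \eqref{eq:Snaive} and \eqref{eq:naiveschemeB}, I expect the cell-independent relations $(1+\sigma\Delta t)\,u_{0}^1 = u_{0}^0 + \sigma\Delta t\,B^0$ and $(1+\sigma\Delta t)\,B^1 = B^0 + \sigma\Delta t\,u_{0}^1$, where $u_0,B$ denote the common values over the identical cells.

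Writing $a:=\sigma\Delta t$ and solving these two relations explicitly gives $u_0^1=(u_0^0+aB^0)/(1+a)$ and $B^1=(B^0(1+a+a^2)+a u_0^0)/(1+a)^2$. I would then evaluate $E^1-E^0$. In the leading case $u_0^0=0$ a short computation reduces $2(E^1-E^0)$ to $\tfrac{a}{(1+a)^4}\,(a^3-2a-2)\,\Vert\mathbf B^0\Vert_E^2$, which is positive precisely once $a$ exceeds the unique positive root of $a^3-2a-2$ (about $1.77$); for instance $\sigma\Delta t=2$ already yields growth, and the $O(\delta)$ corrections coming from the nonzero amplitude are strictly dissipative and can be absorbed by taking $\delta$ small. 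The mechanism is transparent in the stiff limit $a\to\infty$: the staggering drives $u_0^1\to B^0$ and $B^1\to B^0$, so the pair moves from $(0,B^0)$ towards $(B^0,B^0)$ and the energy nearly doubles, in contrast with the compensating dissipation $-\sigma(u_{p0}-B_p)^2$ that the fully coupled implicit step produces in Theorem~\ref{th:timecont}.

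The main obstacle is not the final inequality, which is elementary, but the bookkeeping of the reduction: one must confirm that the augmentation and truncation of the bases do not introduce spurious couplings, i.e. that the single-spatial-mode, isotropic subspace is \emph{exactly} invariant under the full update \eqref{eq:naivescheme}, so that the two scalar relations above hold exactly rather than approximately. I would also stress that the operative regime $\sigma\Delta t\gtrsim 2$ is exactly the stiff-opacity regime the method targets (large $\sigma$ together with a hyperbolic step $\Delta t=\mathrm{CFL}\cdot\Delta x$), which is what makes the breakdown of the naive scheme a genuine defect rather than a large-step curiosity.
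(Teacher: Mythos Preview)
Your reduction to spatially constant, isotropic data is exactly the one the paper uses, and your scalar relations $(1+\sigma\Delta t)u_0^1=u_0^0+\sigma\Delta t\,B^0$, $(1+\sigma\Delta t)B^1=B^0+\sigma\Delta t\,u_0^1$ and the resulting identity $2(E^1-E^0)=\tfrac{a(a^3-2a-2)}{(1+a)^4}\|\mathbf B^0\|_E^2$ (for $u_0^0=0$, $a=\sigma\Delta t$) are correct. Since the theorem only asks for \emph{some} $(\mathbf u^0,\mathbf B^0,\Delta t)$, your argument suffices; the invertibility issue for $\mathbf S^0$ is handled by continuity in $\delta$, not by any ``strictly dissipative'' structure of the $O(\delta)$ terms (that claim is unnecessary and not obviously true).

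The paper's proof, however, takes a different and strictly stronger route. Instead of fixing a far-from-equilibrium input and pushing $\sigma\Delta t$ past the root $\approx 1.77$, the paper works \emph{backwards}: it prescribes the output $u^1$ and $B^1=u^1+\alpha$ with a small gap $\alpha>0$, computes the corresponding input $(u^0_0,B^0)$ from the scalar relations, and shows $E^1>E^0$ whenever $0<\alpha<\tfrac{\sigma\Delta t}{1+\sigma\Delta t+\sigma^2\Delta t^2+\tfrac12\sigma^3\Delta t^3}\,u^1$. This interval is nonempty for \emph{every} $\Delta t>0$, so the naive scheme can increase energy at arbitrarily small time steps and for data arbitrarily close to equilibrium. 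Your argument only yields growth in the stiff regime $\sigma\Delta t\gtrsim 1.77$, which you defend as the ``operative regime''; the paper's construction removes that caveat entirely and makes the point that no CFL-type restriction can rescue the naive coupling.
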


\begin{proof}
Let us multiply the $\textit{S}$-step \eqref{eq:Snaive} with $\widehat X_{\alpha k}^1 \widehat V_{\beta \ell}^1$ and sum over $k$ and $\ell$. Again we make use of the projections $P^{X,1}_{\alpha p} = \widehat X_{\alpha k}^1 \widehat X_{pk}^1$ and $P^{V,1}_{m \beta} = \widehat V_{m \ell}^1 \widehat V_{\beta \ell}^1$. With the definition of $\widetilde S_{k\ell}^{0}$ we obtain
\begin{align}\label{eq:fullschemeUnstable}
u_{\alpha \beta}^1 =& u_{pm}^0 - P^{X,1}_{\alpha p} \Delta t D^x_{pq} u_{qn}^0 A_{mn} P^{V,1}_{m \beta} + P^{X,1}_{\alpha p} \Delta t D^{xx}_{pq} u_{qn}^0 |A|_{mn} P^{V,1}_{m \beta}\\\nonumber
&+ \sigma \left( \Delta t P^{X,1}_{\alpha p} B_p^0 \delta_{m0} P^{V,1}_{m \beta} - \Delta t u_{\alpha \beta}^1 \right).
\end{align}

Let us choose a constant solution in space, i.e., $B^1_p = B^1$ and $u^1_{\alpha \beta} = u^1\delta_{\beta 0}$ for all spatial indices $p, \alpha = 1,...,n_x$. The scalar values $B^1$ and $u^1$ are chosen such that $B^1 = u^1 + \alpha$ where 
\begin{align*}
    0<\alpha < \frac{\sigma \Delta t}{1+ \sigma \Delta t + \sigma^2 \Delta t^2  + \frac12\sigma^3 \Delta t^3} u^1.
\end{align*}
We can now verify that we obtain our chosen values for $B^1_p$ and $u^1_{\alpha \beta}$ after a single step of \eqref{eq:fullschemeUnstable} when using the initial condition
\begin{subequations}\label{eq:initUnstable}
    \begin{align}
   B^0_p =& B^{1} + \sigma\Delta t \alpha = u^1+\alpha (1 + \sigma\Delta t), \\
    u_{pm}^0 =& \left( u^1+\sigma \Delta t (u^1 - B^0_p)\right) \delta_{m0} = \left(u^1-\sigma \Delta t \alpha (1 + \sigma\Delta t) \right) \delta_{m0}.
\end{align}
\end{subequations}
To show this, note that since the solution is constant in space, all terms containing the stencil matrices $\mathbf{D}^x$ and $\mathbf{D}^{xx}$ drop out and we are left with
\begin{align}\label{eq:fullschemeUnstable2}
u_{\alpha \beta}^1 &= u_{pm}^0 + \sigma \left( \Delta t P^{X,1}_{\alpha p} B_p^0 \delta_{m0} P^{V,1}_{m \beta}-  \Delta t u_{\alpha \beta}^1 \right).
\end{align}
Since $B_p^0$ is constant in space and $\delta_{m0}$ lies in the span of our basis, we know that all projections in the above equation are exact. Plugging the initial values \eqref{eq:initUnstable} into \eqref{eq:fullschemeUnstable2} we then directly obtain $u_{\alpha \beta}^1 = u^1 \delta_{\beta 0}$. Similarly, by plugging \eqref{eq:initUnstable} into \eqref{eq:naiveschemeB}, we obtain $B^1_p = B^1$.

Then, we square both of the initial terms \eqref{eq:initUnstable} to get
\begin{align*}
    (B^0_p)^2 =& (B^1)^2 + 2\sigma\Delta t \alpha B^1 + \sigma^2\Delta t^2 \alpha^2 = (B^1)^2 + 2\sigma\Delta t \alpha (u^1 + \alpha) + \sigma^2\Delta t^2 \alpha^2, \\
    (u_{pm}^0)^2 =& \left( (u^1)^2-2\sigma \Delta t \alpha u^1 (1 + \sigma\Delta t) + \sigma^2 \Delta t^2 \alpha^2 (1 + \sigma\Delta t)^2 \right) \delta_{m0}.
\end{align*}
Summing over $p$ and $m$, adding these two terms and multiplying with $\frac12$ yields
\begin{align*}
    E^1 = E^0 + \sigma^2 \Delta t^2\alpha u^1 - \sigma \Delta t \alpha^2- \frac12\sigma^2 \Delta t^2\alpha^2 - \frac12\sigma^2 \Delta t^2\alpha^2 (1+\sigma \Delta t)^2.
\end{align*}
Note that $E^1 > E^0$ if
\begin{align*}
    \sigma \Delta t u^1 - \alpha- \frac12\sigma \Delta t\alpha - \frac12\sigma \Delta t\alpha (1+\sigma \Delta t)^2 > 0.
\end{align*}
Rearranging gives
%\begin{align*}
%    \sigma \Delta t u_0^{n+1}  > \alpha+ \sigma \Delta t\alpha + \sigma^2 \Delta t^2\alpha  + \frac12\sigma^3 \Delta t^3\alpha,
%\end{align*}
%hence 
\begin{align*}
    \alpha < \frac{\sigma \Delta t}{1+ \sigma \Delta t + \sigma^2 \Delta t^2  + \frac12\sigma^3 \Delta t^3} u^1.
\end{align*}
This is exactly the domain $\alpha$ is chosen from. Hence, we have $E^1 > E^0$, which is the desired result.
\end{proof}

\subsection{Energy stable space-time discretization}
We have seen that the naive scheme presented in \eqref{eq:naivescheme} can increase the total energy in one time step. The main goal of this section is to construct a novel energy stable time integration scheme for which the corresponding analysis leads to a classic hyperbolic CFL condition that enables us to operate up to a time step size of $\Delta t = \text{CFL} \cdot \Delta x$. For constructing this energy stable scheme, we write the original equations in two parts followed by a basis augmentation and correction step. 

In detail, we first solve
\begin{subequations}\label{eq:schemeStable}
\begin{align}
K_{pk}^{\star} =&\, K_{pk}^{0} -\Delta t D^x_{qp} K_{pj}^{0} V_{nj}^{0} A_{mn} V_{mk}^{0} + \Delta t D^{xx}_{qp} K_{pj}^{0} V_{nj}^{0} |A|_{mn} V_{mk}^{0}, \label{eq:schemeStableK} \\
L_{mk}^{\star} =&\,  L_{mk}^{0} -  \Delta t X_{qk}^{0} D_{qp}^x X_{pi}^{0} L_{ni}^{0} A_{mn} + \Delta t X_{qk}^{0} D_{qp}^{xx} X_{pi}^{0} L_{ni}^{0} |A|_{mn}.\label{eq:schemeStableL}
\end{align}
We perform a QR-decomposition of the augmented quantities $\mathbf{X}^{\star} \mathbf{R} = [\mathbf{K}^{\star}, \mathbf{X}^{0}]$ and $\mathbf{V}^{\star} \mathbf{\widetilde R} = [\mathbf{L}^{\star}, \mathbf{V}^{0}]$ to obtain the augmented and time updated bases $\mathbf{X}^\star$ and $\mathbf{V}^\star$. Note that $\mathbf{R}$ and $\mathbf{\widetilde R}$ are discarded. With $\widetilde S_{\alpha \beta}^{0} = X^{\star}_{j\alpha}X_{j\ell}^{0}S_{\ell m}^{0}V_{km}^{0}V_{k\beta}^{\star}$ we then solve the $\textit{S}$-step equation
\begin{align}\label{eq:schemeStableS}
S_{\alpha \beta}^{\star} = \widetilde S_{\alpha \beta}^{0} - &\Delta t X_{p\alpha}^{\star} D^x_{pq} X_{qi}^{\star} \widetilde S_{ij}^{0} V_{nj}^{\star} A_{mn} V_{m \beta}^{\star} + \Delta t X_{p\alpha}^{\star} D^{xx}_{pq} X_{qi}^{\star} \widetilde S_{ij}^{0} V_{nj}^{\star} |A|_{mn} V_{m \beta}^{\star}.
\end{align}
Second, we solve the coupled equations for the internal energy $\mathbf{B} \in \mathbb{R}^{n_x}$ and the quantity $\mathbf{\widehat u_{0}}^{1} = (\widehat u_{j0}^{1})_j \in \mathbb{R}^{n_x}$ to which we refer as the zeroth order moment according to
\begin{align}\label{eq:schemeStableu0}
    \widehat u_{j0}^{1} =&\, X_{j\ell}^{0} S_{\ell m}^{0} V_{0m}^{0} - \Delta t D^x_{ji} X_{in}^{\star} \widetilde S_{nm}^{0} V_{\ell m}^{\star} A_{0\ell}+\Delta t D^{xx}_{ji}X_{in}^{\star} \widetilde S_{nm}^{0} V_{\ell m}^{\star} |A|_{0\ell}\\\nonumber
 &+ \sigma\Delta t(B_j^{1}-\widehat u_{j0}^{1}),\\
    B_j^{1} =&\, B_j^0 + \sigma\Delta t(\widehat u_{j0}^{1}-B_j^{1})\label{eq:schemeStableB}.
\end{align}
Following \cite[Section~6]{kusch2023stability} we perform the opacity update only on $\mathbf{L}= \mathbf{V}^{\star} \mathbf{S}^{\star}$ according to 
\begin{align}
    L_{mk}^{\star, \text{scat}} = \frac{1}{1+\Delta t \sigma} L_{mk} \quad \text{ for } k \ne 0\label{eq:schemeStableScattering}
\end{align}
and perform a QR-decomposition $ \mathbf{V}^{\star, \text{scat}} \mathbf{S}^{\star, \text{scat}, \top} = \mathbf{L}^{\star, \text{scat}}$ to retrieve the factorized basis $\mathbf{V}^{\star, \text{scat}}$ and the coefficients from the matrix $\mathbf{S}^{\star, \text{scat}}$. We then augment the basis matrices according to 
\begin{align}\label{eq:schemeStableAugmentu0}
    \mathbf{\widetilde X}^{1} = \text{qr}([\mathbf{\widehat u}_0^1, \mathbf{X}^{\star}]), \quad \mathbf{\widetilde V}^{1} = \text{qr}([\mathbf{e}_1, \mathbf{V}^{\star, \text{scat}}]).
\end{align}
Third, the coefficient matrix is updated via
\begin{align}\label{eq:schemeStableScorrect}
    \mathbf{\widetilde S}^{1} = \mathbf{\widetilde X}^{1,\top}\mathbf{X}^{\star}\mathbf{S}^{\star, \text{scat}}\mathbf{V}^{\star, \text{scat}, \top} (\mathbf{I} - \mathbf{e}_1\mathbf{e}_1^{\top})\mathbf{\widetilde V}^{1} + \mathbf{\widetilde  X}^{1,\top}\mathbf{\widehat u}_0^1\mathbf{e}_{1,\top}\mathbf{\widetilde V}^{1} \, \in\mathbb{R}^{(2r + 1)\times (2r + 1)}.
\end{align}
Then, we obtain the updated solution $\mathbf{\widetilde X}^1 \mathbf{\widetilde S}^1 \mathbf{\widetilde V}^{1,\top} \in \mathbb{R}^{n_x \times N}$. Lastly, we truncate this rank $2r + 1$ solution to a new rank $r_1$ using a suited truncation strategy such as proposed in \cite{ceruti2022rank} or the conservative truncation strategy of \cite{einkemmer2022robust}. This finally gives the low-rank factors $\mathbf{X}^1, \mathbf{S}^1$ and $\mathbf{V}^1$.
\end{subequations}
We show that the given scheme is energy stable and start with the following Lemma.

\begin{lemma}\label{Lemma:stencil_matrices_Fourier}
Let us denote $u_{jk}^1 := \widetilde X_{j\alpha}^1 \widetilde S_{\alpha \beta}^1 \widetilde V_{k\beta}^1$. Under the time step restriction $\Delta t \leq \Delta x$ it holds
\begin{align}\label{eq:Lemma2}
\frac{\Delta t}{2}(D^x_{ji}u_{jk}^{1}A_{k\ell} - D^{xx}_{ji}u_{jk}^{1}|A|_{k\ell})^2-\left(D_{ji}^+ u_{ik}^1|A|_{k\ell}^{1/2}\right)^2 \leq 0.
\end{align}
\end{lemma}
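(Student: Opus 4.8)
The plan is to reduce the claimed inequality, which couples the spatial stencils with the angular flux matrix $\mathbf A$, to a collection of scalar Fourier-mode inequalities. First I would rewrite both squared terms in matrix form. Writing $\mathbf u^1 =: \mathbf U \in \mathbb{R}^{n_x\times N}$ and reading off the contractions (recall Einstein summation with squaring over the remaining free indices), the left summand is $\tfrac{\Delta t}{2}\Vert (\mathbf D^x)^\top \mathbf U \mathbf A - \mathbf D^{xx}\mathbf U\,|\mathbf A|\Vert_F^2$, where I used the symmetry $(\mathbf D^{xx})^\top = \mathbf D^{xx}$ from the summation-by-parts Lemma, and the subtracted term is $\Vert \mathbf D^+\mathbf U\,|\mathbf A|^{1/2}\Vert_F^2$, so that \eqref{eq:Lemma2} becomes $\tfrac{\Delta t}{2}\Vert (\mathbf D^x)^\top \mathbf U \mathbf A - \mathbf D^{xx}\mathbf U\,|\mathbf A|\Vert_F^2 \le \Vert \mathbf D^+\mathbf U\,|\mathbf A|^{1/2}\Vert_F^2$.

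Second, I would diagonalize the angular part. Using $\mathbf A = \mathbf{Q}\mathbf{M}\mathbf{Q}^\top$, $|\mathbf A| = \mathbf Q|\mathbf M|\mathbf Q^\top$ and $|\mathbf A|^{1/2} = \mathbf Q|\mathbf M|^{1/2}\mathbf Q^\top$ with $\mathbf Q$ orthogonal, I substitute $\widetilde{\mathbf U} = \mathbf U\mathbf Q$. Right-multiplication by the orthogonal $\mathbf Q^\top$ leaves the Frobenius norm invariant, and because $\mathbf M,|\mathbf M|$ are diagonal the whole expression decouples column-by-column in the angular index. Thus it suffices to prove, for every eigenvalue $\sigma_k$ and every spatial vector $v\in\mathbb{R}^{n_x}$ (a column of $\widetilde{\mathbf U}$),
$$\frac{\Delta t}{2}\Vert \sigma_k (\mathbf D^x)^\top v - |\sigma_k|\,\mathbf D^{xx}v\Vert^2 \le |\sigma_k|\,\Vert \mathbf D^+ v\Vert^2.$$

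Third — the heart of the argument — I would exploit that under periodic boundary conditions $\mathbf D^x,\mathbf D^{xx},\mathbf D^+$ are circulant and hence simultaneously diagonalized by the discrete Fourier transform. Expanding $v$ over Fourier modes $\theta$, the symbols are $\widehat{d^x}(\theta)=\tfrac{i\sin\theta}{\Delta x}$, $\widehat{d^{xx}}(\theta)=\tfrac{\cos\theta-1}{\Delta x}$ and $|\widehat{d^+}(\theta)|^2=\tfrac{1-\cos\theta}{\Delta x}$, the last being the Fourier form of the identity $\widehat{d^{xx}}=-|\widehat{d^+}|^2$ implied by the summation-by-parts Lemma. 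By Parseval it is enough to check the inequality mode-by-mode. Since $(\mathbf D^x)^\top$ has purely imaginary and $\mathbf D^{xx}$ purely real symbol, the modulus squared of the combined symbol is $\tfrac{1}{\Delta x^2}\big(|\sigma_k|^2(1-\cos\theta)^2+\sigma_k^2\sin^2\theta\big)=\tfrac{2\sigma_k^2(1-\cos\theta)}{\Delta x^2}$, using $|\sigma_k|^2=\sigma_k^2$ and the key identity $(1-\cos\theta)^2+\sin^2\theta=2(1-\cos\theta)$. The per-mode inequality then collapses to $\tfrac{\Delta t\,|\sigma_k|}{\Delta x}\le 1$, with the modes $\cos\theta=1$ and any eigenvalue $\sigma_k=0$ yielding the trivial $0\le 0$.

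Finally, I would close the estimate by bounding the spectrum of $\mathbf A$: since $\mathbf A$ with $A_{mn}=\langle P_m,\mu P_n\rangle_\mu$ is the Legendre-moment matrix of multiplication by $\mu\in[-1,1]$, it is a symmetric tridiagonal (Jacobi) matrix whose eigenvalues are the Gauss--Legendre nodes in $(-1,1)$, so $|\sigma_k|\le 1$. Combined with the hypothesis $\Delta t\le\Delta x$ this gives $\tfrac{\Delta t\,|\sigma_k|}{\Delta x}\le 1$ for all $k$, and summing the nonpositive per-mode, per-angular contributions yields \eqref{eq:Lemma2}. The main obstacle I anticipate is the bookkeeping in Step three: making the simultaneous circulant diagonalization precise and correctly tracking the imaginary/real split so that the cross term between $(\mathbf D^x)^\top$ and $\mathbf D^{xx}$ is handled; once the symbol modulus reduces via $(1-\cos\theta)^2+\sin^2\theta=2(1-\cos\theta)$, the remainder is the spectral bound $|\sigma_k|\le 1$ and the CFL hypothesis.
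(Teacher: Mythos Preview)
Your proposal is correct and follows essentially the same route as the paper: diagonalize $\mathbf A$ by $\mathbf Q$, diagonalize the periodic stencil matrices by the discrete Fourier transform, and reduce the inequality to the per-mode CFL condition $\Delta t\,|\sigma_k|/\Delta x\le 1$ via the identity $(1-\cos\theta)^2+\sin^2\theta=2(1-\cos\theta)$. Your additional justification that $|\sigma_k|\le 1$ because the eigenvalues of the Jacobi matrix $\mathbf A$ are Gauss--Legendre nodes in $(-1,1)$ is a welcome detail that the paper simply asserts.
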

\begin{proof}
Following \cite{kusch2023stability}, we employ a Fourier analysis which allows us to write the stencil matrices $\mathbf{D}^{x,xx,+}$ in diagonal form. Let us define 
$\mathbf E
\in\mathbb{C}^{n_x\times n_x}$ with entries
\begin{align*}
E_{k\alpha} = \sqrt{\Delta x}\exp(i\alpha\pi x_{k}), \quad k,\alpha = 1,...,n_x
\end{align*}
with $i\in\mathbb{C}$ being the imaginary unit. Then, the matrix $\mathbf{E}$ is orthonormal, i.e., $\mathbf E\mathbf E^H = \mathbf E^H\mathbf E = \mathbf I$ (the uppercase $H$ denotes the complex transpose) and it diagonalizes the stencil matrices:
\begin{align}\label{eq:diagScheme}
\mathbf{D}^{x,xx,+}\mathbf E = \mathbf E\mathbf{\Lambda}^{x,xx,+}\;.
\end{align}
The matrices $\mathbf{\Lambda}^{x,xx,+}$ are diagonal with entries
\begin{align*}
    \lambda^{x}_{\alpha,\alpha} =& \frac{1}{2\Delta x}(e^{i\alpha\pi \Delta x}-e^{-i\alpha\pi \Delta x}) = \frac{i}{\Delta x} \sin(\omega_{\alpha})\;, \\
    \lambda_{\alpha,\alpha}^{xx} =& \frac{1}{2\Delta x} \left(e^{i\alpha\pi \Delta x}-2+e^{-i\alpha\pi \Delta x}\right) = \frac{1}{\Delta x} \left(\cos(\omega_{\alpha})-1\right)\;,\\
    \lambda_{\alpha,\alpha}^{+} =& \frac{1}{\sqrt{2\Delta x}} \left(e^{i\alpha\pi \Delta x}-1\right) = \frac{1}{\sqrt{2\Delta x}} \left(\cos(\omega_{\alpha}) + i\sin(\omega_{\alpha})-1\right)\;,
\end{align*}
where we use $\omega_{\alpha}:=\alpha\pi \Delta x$. Moreover, recall that we can write $\mathbf A = \mathbf Q\mathbf M \mathbf Q^{\top}$ where $\mathbf{M} = \text{diag}(\sigma_1,\cdots,\sigma_N)$. We then have with $\widehat u_{jk} = E_{j\ell}u_{\ell m}Q_{mk}$
\begin{align*}
    &\frac{\Delta t}{2} (D^x_{ji}u_{jk}^{1}A_{k\ell} - D^{xx}_{ji}u_{jk}^{1}|A|_{k\ell})^2-\left(D_{ji}^+ u_{ik}^{1}|A|_{k\ell}^{1/2}\right)^2
    \\
    =&\frac{\Delta t}{2} \left|\lambda^x_{jj}\widehat u_{jk}^{1}\sigma_{k} - \lambda^{xx}_{jj}\widehat u_{jk}^{1}|\sigma_{k}|\right|^2-\left|\lambda^+_{jj} \widehat u_{jk}^{1}|\sigma_{k}|^{1/2}\right|^2 \\
    \leq & \left[\Delta t\left( \frac{|\sigma_k|^2}{\Delta x^2}\cdot\left|1-\cos(\omega_j)\right|\right) - \frac{|\sigma_k|}{\Delta x} \cdot\left|1-\cos(\omega_j)\right|\right]  (\widehat u_{jk}^1)^2.
\end{align*}
To ensure negativity, we must have
\begin{align*}
    \Delta t \left( \frac{|\sigma_k|^2}{\Delta x^2} \cdot\left|1-\cos(\omega_j)\right|\right) \leq \frac{|\sigma_k|}{\Delta x} \cdot\left|1-\cos(\omega_j)\right|.
\end{align*}
Hence, for $\Delta t \leq \frac{\Delta x}{|\sigma_k|}$ equation \eqref{eq:Lemma2} holds. Since $|\sigma_k| \leq 1$, we have proven the Lemma.
\end{proof}
We can now show energy stability of the proposed scheme:
\begin{theorem}
Under the time step restriction $\Delta t \leq \Delta x$, the scheme \eqref{eq:schemeStable} is energy stable, i.e.,
\begin{align}
    \Vert \mathbf B^{1}\Vert_E^2 + \Vert \mathbf{ X}^{1}\mathbf{ S}^1\mathbf{ V}^{1,\top}\Vert_F^2 \leq \Vert \mathbf B^{0}\Vert_E^2 + \Vert \mathbf X^{0}\mathbf{S}^0\mathbf{V}^{0,\top}\Vert_F^2.
\end{align}
\end{theorem}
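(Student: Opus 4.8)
The plan is to propagate the energy $\|\mathbf B\|_E^2+\|\mathbf u\|_F^2$ through the three stages of \eqref{eq:schemeStable} and show that each is non-expansive. First I would discard the truncation: since it retains the dominant singular values it satisfies $\|\mathbf X^1\mathbf S^1\mathbf V^{1,\top}\|_F\le\|\mathbf u^1\|_F$ with $u^1_{jk}=\widetilde X^1_{j\alpha}\widetilde S^1_{\alpha\beta}\widetilde V^1_{k\beta}$ as in Lemma~\ref{Lemma:stencil_matrices_Fourier}, and it leaves $\mathbf B^1$ untouched; hence it suffices to bound $\|\mathbf B^1\|_E^2+\|\mathbf u^1\|_F^2$. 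From the correction step \eqref{eq:schemeStableScorrect} one reads off $\mathbf u^1=\mathbf u^{\star,\mathrm{scat}}(\mathbf I-\mathbf e_1\mathbf e_1^\top)+\widehat{\mathbf u}_0^1\mathbf e_1^\top$, i.e.\ the $k=0$ column of $\mathbf u^1$ is the coupled quantity $\widehat{\mathbf u}_0^1$ and the columns $k\ne0$ are the scattering-damped transport moments $(1+\sigma\Delta t)^{-1}u^\star_{jk}$. Writing $\mathcal E_0(\cdot)$ and $\mathcal E_+(\cdot)$ for the zeroth- and higher-moment parts of the squared Frobenius norm, this gives $\|\mathbf u^1\|_F^2=\|\widehat{\mathbf u}_0^1\|_E^2+(1+\sigma\Delta t)^{-2}\mathcal E_+(\mathbf u^\star)$.

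For the zeroth moment and $\mathbf B$, equations \eqref{eq:schemeStableu0}--\eqref{eq:schemeStableB} read, componentwise in the spatial index $j$, as the symmetric system $\begin{pmatrix}1+\sigma\Delta t & -\sigma\Delta t\\ -\sigma\Delta t & 1+\sigma\Delta t\end{pmatrix}(\widehat u_{j0}^1,B_j^1)^{\top}=(w_{j0},B_j^0)^{\top}$, where $w_{j0}$ is the explicitly transported zeroth moment. Its matrix has eigenvalues $1$ and $1+2\sigma\Delta t$, so the inverse is a contraction and $\|\widehat{\mathbf u}_0^1\|_E^2+\|\mathbf B^1\|_E^2\le \mathcal E_0(w)+\|\mathbf B^0\|_E^2$. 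The eigenvalue $1$ expresses conservation of the $u_0+B$ direction while the difference is damped, exactly the discrete analogue of the $-\sigma(u_{p0}-B_p)^2$ term of Theorem~\ref{th:timecont}. Dropping the scattering factor $(1+\sigma\Delta t)^{-2}\le1$, it then remains to prove the pure-transport estimate $\mathcal E_0(w)+\mathcal E_+(\mathbf u^\star)\le\|\mathbf u^0\|_F^2$.

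To this end I would introduce the full, unprojected explicit-Euler transport $w_{jk}=u^0_{jk}-\Delta t\,D^x_{ji}u^0_{i\ell}A_{k\ell}+\Delta t\,D^{xx}_{ji}u^0_{i\ell}|A|_{k\ell}$, whose zeroth moment is $w_{\cdot0}$ and whose projection onto the augmented bases equals the $S$-step solution, $\mathbf u^\star=\mathbf P^{X,\star}\,w\,\mathbf P^{V,\star}$ (with $\mathbf P^{X,\star},\mathbf P^{V,\star}$ the orthogonal projectors onto the ranges of $\mathbf X^\star,\mathbf V^\star$, as in Theorem~\ref{th:timecont}). Expanding $\|w\|_F^2$ and diagonalising $\mathbf A=\mathbf Q\mathbf M\mathbf Q^\top$ exactly as in Theorem~\ref{th:timecont}, the $\mathbf D^x$ cross term vanishes by the summation-by-parts identities, the $\mathbf D^{xx}$ cross term equals $-2\Delta t(D^+u^0|A|^{1/2})^2$, and the remainder is the square $\Delta t^2(D^xu^0A-D^{xx}u^0|A|)^2$, so that $\|w\|_F^2-\|\mathbf u^0\|_F^2=2\Delta t\bigl[\tfrac{\Delta t}{2}(D^xu^0A-D^{xx}u^0|A|)^2-(D^+u^0|A|^{1/2})^2\bigr]\le0$ by Lemma~\ref{Lemma:stencil_matrices_Fourier} applied to $\mathbf u^0$. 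Since every step starts from a velocity basis containing $\mathbf e_1$ (guaranteed inductively by the augmentation \eqref{eq:schemeStableAugmentu0}), we have $\mathbf e_1\in\mathrm{range}(\mathbf V^0)\subseteq\mathrm{range}(\mathbf V^\star)$, so $\mathbf P^{V,\star}$ commutes with $\mathbf I-\mathbf e_1\mathbf e_1^\top$ and $\mathcal E_+(\mathbf u^\star)=\|\mathbf P^{X,\star}w\,\mathbf P^{V,\star}(\mathbf I-\mathbf e_1\mathbf e_1^\top)\|_F^2\le\|w(\mathbf I-\mathbf e_1\mathbf e_1^\top)\|_F^2=\mathcal E_+(w)$. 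Chaining these bounds, $\mathcal E_0(w)+\mathcal E_+(\mathbf u^\star)\le\mathcal E_0(w)+\mathcal E_+(w)=\|w\|_F^2\le\|\mathbf u^0\|_F^2$, which together with the previous paragraph closes the estimate.

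The two genuine difficulties are: (i) absorbing the $O(\Delta t^2)$ antidissipative term generated by squaring the explicit transport increment into the $O(\Delta t)$ numerical diffusion of $\mathbf D^{xx}$ --- this is precisely the content of Lemma~\ref{Lemma:stencil_matrices_Fourier}, and it is where the hyperbolic CFL condition $\Delta t\le\Delta x$ together with $|\sigma_k|\le1$ enters; and (ii) verifying that the projections onto the augmented bases do not inflate the higher-moment energy, which hinges on the zeroth-moment direction $\mathbf e_1$ lying in the velocity basis so that the moment split commutes with $\mathbf P^{V,\star}$ (a clean decomposition fails without this). The contractivity of the $2\times2$ implicit coupling, of the scattering factor, and of the truncation are then routine.
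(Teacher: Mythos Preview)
Your proposal is correct and arrives at the same conclusion, but the route is genuinely different from the paper's own argument.

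The paper proceeds in a PDE-energy-estimate style: it multiplies the combined update $(1+\sigma\Delta t)u^1_{jk}=\ldots$ by $u^1_{jk}$, uses the polarization identity $u^0u^1=\tfrac12(u^1)^2+\tfrac12(u^0)^2-\tfrac12(u^1-u^0)^2$, then \emph{adds and subtracts} the terms $\Delta t\,u^1 D^x u^1 A$ and $\Delta t\,u^1 D^{xx}u^1|A|$ so that the cross-in-time transport contribution can be bounded via Young's inequality. This is what produces the expression $\tfrac{\Delta t}{2}(D^x u^1 A-D^{xx}u^1|A|)^2-(D^+u^1|A|^{1/2})^2$, to which Lemma~\ref{Lemma:stencil_matrices_Fourier} is applied with $u^1$ as the argument. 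The $\sigma$-terms are handled by completing the square to $-\sigma\Delta t(B^1_j-u^1_{j0})^2$, exactly mirroring the time-continuous proof of Theorem~\ref{th:timecont}.

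You instead decompose the step into its physical substeps and show each is non-expansive: (a) the explicit Lax--Friedrichs transport $u^0\mapsto w$ is a contraction in $\|\cdot\|_F$ by a direct von-Neumann expansion of $\|w\|_F^2$ and Lemma~\ref{Lemma:stencil_matrices_Fourier} applied to $u^0$; (b) the $2\times 2$ implicit coupling for $(\widehat u_0^1,B^1)$ is a contraction by its eigenvalues $1,\,1+2\sigma\Delta t$; (c) the scattering factor, the basis projections, and the truncation are all non-expansive. This modular structure avoids Young's inequality altogether and makes the role of each stage transparent; it also shows at a glance that the CFL restriction comes solely from the explicit advection substep. The paper's approach, on the other hand, stays closer to the semi-discrete estimate and makes the parallel with Theorem~\ref{th:timecont} visible.

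One point you make explicit that the paper leaves implicit: the need for $\mathbf e_1\in\mathrm{range}(\mathbf V^\star)$ so that $\mathbf P^{V,\star}$ commutes with $\mathbf I-\mathbf e_1\mathbf e_1^\top$. The paper's line ``$P^V_{km}P^X_{jp}u^1_{jk}=u^1_{jk}$ for $k\neq 0$'' uses this same fact to drop the projectors, and it is indeed guaranteed inductively by the augmentation \eqref{eq:schemeStableAugmentu0} together with the conservative truncation (with the usual caveat that the initial basis must contain $\mathbf e_1$).
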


\begin{proof}
First, we multiply \eqref{eq:schemeStableB} with $B_j^{1}$ and sum over $j$. Then,
\begin{align*}
    \left(B_j^{1}\right)^2 =& B_j^0B_j^{1} + \sigma\Delta t \left(u_{j0}^{1} B_j^{1}-\left(B_j^{1}\right)^2\right).
\end{align*}
Let us note that
\begin{align*}
   B_j^{0}B_j^{1} = \frac{\left(B_j^{1}\right)^2}{2} + \frac{\left(B_j^{0}\right)^2}{2} - \frac12(B_j^{1}-B_j^{0})^2.
\end{align*}
Hence,
\begin{align}\label{eq:discreteB2}
    \frac12\left(B_j^{1}\right)^2 =& \frac12\left(B_j^{0}\right)^2 - \frac12(B_j^{1}-B_j^{0})^2 + \sigma\Delta t \left(u_{j0}^{1} B_j^{1}-\left(B_j^{1}\right)^2\right).
\end{align}

 To obtain a similar expression for $(u_{jk}^{1})^2$, we multiply \eqref{eq:schemeStableS} with $X_{j \alpha}^{\star} V_{k   \beta}^{\star}$ and sum over $\alpha$ and $\beta$. For simplicity of notation, let us define $u_{jk  }^{\star} := X_{j  \alpha }^{\star}S_{\alpha \beta }^{\star} V_{k  \beta}^{\star}$ and $u_{jk  }^{0} := X_{j \alpha}^{\star}\widetilde S_{\alpha \beta}^{0} V_{k  \beta}^{\star}$ as well as the projections $P^X_{j p} := X_{j \alpha}^{\star}X_{p\alpha}^{\star}$ and $P^V_{k   m} := V_{k  \beta}^{\star} V_{m\beta}^{\star}$. Then, we obtain the system
\begin{align}\label{eq:schemeu}
    u_{j  k  }^{\star} = u_{j  k  }^{0} - \Delta t P^X_{j  p} D^x_{pq} u_{qn}^{0} A_{mn} P^V_{k   m} + \Delta t P^X_{j  p} D^{xx}_{pq} u_{qn}^{0} |A|_{mn} P^V_{k   m}.
\end{align}
Next, we define $u_{jk}^1 := \widetilde X_{j\alpha}^1 \widetilde S_{\alpha \beta}^1 \widetilde V_{k\beta}^1$ and note that by construction we have that 
\begin{align*}
    u_{jk}^1 = \frac{u_{j  k  }^{\star} (1-\delta_{k0})}{1+\sigma \Delta t} + \widehat u_{j  0  }^{1}\delta_{k0}.
\end{align*}
Hence, plugging in the schemes for $u_{j  k  }^{\star}$ and $ \widehat u_{j0}^1$, that is, \eqref{eq:schemeu} and \eqref{eq:schemeStableu0} we get
\begin{align*}
    (1+\sigma\Delta t)u_{jk}^1 =&\, \left( u_{j  k  }^{0} - \Delta t P^X_{j  p} D^x_{pq} u_{qn}^{0} A_{mn} P^V_{k   m} + \Delta t P^X_{j  p} D^{xx}_{pq} u_{qn}^{0} |A|_{mn} P^V_{km}\right) (1-\delta_{k0})\\
    &+ \Bigl( X_{j\ell}^{0}S_{\ell m}^{0} V_{0m}^{0} - \Delta tD^x_{ji}X_{in}^{\star} \widetilde S_{nm}^{0} V_{\ell m}^{\star} A_{0\ell}+\Delta tD^{xx}_{ji}X_{in}^{\star} \widetilde S_{nm}^{0} V_{\ell m}^{\star} |A|_{0\ell}\\
    &+ \sigma\Delta t B_j^{1} \Bigr)  \delta_{k0}.
\end{align*}
Let us note that $P^V_{km}P^X_{jp}u_{jk}^{1} = u_{jk}^{1}$ for $k\neq 0$. Hence, multiplying the above equation with $u_{jk}^{1}$ and summing over $j$ and $k$ gives
\begin{align*}
    \frac12\left(u_{jk}^{1}\right)^2 = \frac12\left(u_{jk}^{0}\right)^2 - \frac12(u_{jk}^{1}-u_{jk}^{0})^2-&\Delta t u_{jk}^{1}D^x_{ji}u_{i\ell}^0 A_{k\ell}+\Delta tu_{jk}^{1}D^{xx}_{ji}u_{i\ell}^0 |A|_{k\ell}\\
    +& \sigma\Delta tu_{jk}^{1}(B_j^{1}\delta_{k0}-u_{jk}^{1}).
\end{align*}

Let us now add the zero term $\Delta tu_{jk}^{1}D^x_{ji}u_{i\ell}^{1} A_{k\ell}$ and add and subtract the term
$\Delta tu_{jk}^{1}D^{xx}_{ji}u_{i\ell}^{1} |A|_{k\ell}$. Then,
\begin{align*}
    \frac12\left(u_{jk}^{1}\right)^2 = \frac12\left(u_{jk}^{0}\right)^2 - \frac12(u_{jk}^{1}-u_{jk}^{0})^2-&\Delta tu_{jk}^{1}D^x_{ji}(u_{i\ell}^0-u_{i\ell}^{1}) A_{k\ell}\\
    +&\Delta tu_{jk}^{1}D^{xx}_{ji}(u_{i\ell}^0-u_{i\ell}^{1}) |A|_{k\ell}+\Delta tu_{jk}^{1}D^{xx}_{ji}u_{i\ell}^{1} |A|_{k\ell}\\
    +& \sigma\Delta tu_{jk}^{1}(B_j^{1}\delta_{k0}-u_{jk}^{1}).
\end{align*}
In the following, we use Young's inequality which states that for $a,b\in\mathbb{R}$ we have $a\cdot b \leq \frac{a^2}{2} + \frac{b^2}{2}$. We now apply this to the term
\begin{align*}
    -\Delta tu_{jk}^{1}D^x_{ji}(u_{i\ell}^0-u_{i\ell}^{1}) A_{k\ell}+\Delta tu_{jk}^{1}D^{xx}_{ji}(u_{i\ell}^0-u_{i\ell}^{1}) |A|_{k\ell}\\
    \leq\frac12 (u_{i\ell}^0-u_{i\ell}^{1})^2 + \frac{\Delta t^2}{2}(D^x_{ji}u_{jk}^{1}A_{k\ell} - D^{xx}_{ji}u_{jk}^{1}|A|_{k\ell})^2.
\end{align*}
Hence, using $u_{jk}^{1}D^{xx}_{ji}u_{i\ell}^{1} |A|_{k\ell}=-\left(D_{ji}^+ u_{ik}^{1}|A|_{k\ell}^{1/2}\right)^2$ we get
\begin{align}\label{eq:discreteu2}
 \frac12\left(u_{jk}^{1}\right)^2 \leq \frac12\left(u_{jk}^{0}\right)^2 +& \frac{\Delta t^2}{2}(D^x_{ji}u_{jk}^{1}A_{k\ell} - D^{xx}_{ji}u_{jk}^{1}|A|_{k\ell})^2-\Delta t\left(D_{ji}^+ u_{ik}|A|_{k\ell}^{1/2}\right)^2\nonumber\\
 +& \sigma\Delta tu_{jk}^{1}(B_j^{1}\delta_{k0}-u_{jk}^{1}).
\end{align}
As for the continuous case, we add \eqref{eq:discreteu2} and \eqref{eq:discreteB2} to obtain a time update equation for $E^0 := \frac12\left(u_{jk}^{0}\right)^2 + \frac12\left(B_{j}^{0}\right)^2$:
\begin{align}\label{eq:proof_discrete_1}
    E^{1} \leq E^{0} +& \frac{\Delta t^2}{2}(D^x_{ji}u_{jk}^{1}A_{k\ell} - D^{xx}_{ji}u_{jk}^{1}|A|_{k\ell})^2-\Delta t\left(D_{ji}^+ u_{ik}^1|A|_{k\ell}^{1/2}\right)^2\nonumber\\
   +& \sigma\Delta t(u_{j0}^{1}B_j^{1}-(u_{jk}^{1})^2) - \frac12(B_j^{1}-B_j^0)^2 
    + \sigma\Delta t\left(u_{j0}^{1} B_j^{1}-\left(B_j^{1}\right)^2\right)\nonumber\\
    \leq E^{0} +& \frac{\Delta t^2}{2}(D^x_{ji}u_{jk}^{1}A_{k\ell} - D^{xx}_{ji}u_{jk}^{1}|A|_{k\ell})^2-\Delta t\left(D_{ji}^+ u_{ik}^1|A|_{k\ell}^{1/2}\right)^2\nonumber\\
    -& \sigma\Delta t(B_j^{1}-u_{jk}^{1})^2 - \frac12(B_j^{1}-B_j^0)^2 .
\end{align}
With Lemma \ref{Lemma:stencil_matrices_Fourier} we have that
\begin{align*}
\frac{\Delta t}{2}(D^x_{ji}u_{jk}^{1}A_{k\ell} - D^{xx}_{ji}u_{jk}^{1}|A|_{k\ell})^2-\left(D_{ji}^+ u_{ik}^1|A|_{k\ell}^{1/2}\right)^2 \leq 0
\end{align*}
for $\Delta t \leq \Delta x$. Since the truncation step is designed to not alter the zero order moments, we conclude that $E^1 \leq E^0$ and the full scheme is energy stable under the time step restriction $\Delta t \leq \Delta x$.
\end{proof}

\section{Mass conservation}\label{sec:mass}
A drawback of dynamical low-rank approximation using the classical integrators introduced in Section \ref{sec:intro} is that the method does not preserve physical invariants. It has been shown in \cite{einkemmerjoseph2021conservative} that this problem can be overcome when using a modified \textit{L}-step equation. On this basis, \cite{einkemmer2022robust, guo2022} have presented conservative DLRA algorithms where they additionally introduced a conservative truncation step. In contrast to \cite{einkemmer2022robust, guo2022} we do not need to consider a modified \textit{L}-step equation due to the applied basis augmentation strategy from \cite{ceruti2022rank}, but use the conservative truncation step. Then we can show that besides being energy stable, our scheme ensures local conservation of mass. The conservative truncation strategy works as follows:
\begin{enumerate}
\item Compute $\mathbf{\widetilde K} = \mathbf{\widetilde X}^1 \mathbf{\widetilde S}^1$ and split it into two parts $\mathbf{\widetilde K} = [\mathbf{\widetilde K}^{\text{cons}}, \mathbf{\widetilde K}^{\text{rem}}]$ where $\mathbf{\widetilde K}^{\text{cons}}$ corresponds to the first and $\mathbf{\widetilde K}^{\text{rem}}$ consists of the remaining columns of $\mathbf{\widetilde K}$. 

Analogously, distribute $\mathbf{\widetilde V}^1 = [\mathbf{\widetilde V}^{\text{cons}}, \mathbf{\widetilde V}^{\text{rem}}]$ where $\mathbf{\widetilde V}^{\text{cons}}$ corresponds to the first and $\mathbf{\widetilde V}^{\text{rem}}$ consists of the remaining columns of $\mathbf{\widetilde V}$. 

\item Derive $\mathbf{X}^{\text{cons}} = \mathbf{\widetilde K}^{\text{cons}} / \Vert \mathbf{\widetilde K}^{\text{cons}}\Vert$ and $\mathbf{S}^{\text{cons}} = \Vert \mathbf{\widetilde K}^{\text{cons}}\Vert$.

\item Perform a QR-decomposition of $\mathbf{\widetilde K}^{\text{rem}}$ to obtain $\mathbf{\widetilde K}^{\text{rem}} = \mathbf{\widetilde X}^{\text{rem}} \mathbf{\widetilde S}^{\text{rem}}$. 

\item Compute the singular value decomposition of $\mathbf{\widetilde S}^{\text{rem}} = \mathbf{U \Sigma W^\top}$ with $\mathbf{\Sigma} = \diag (\sigma_j)$. Given a tolerance $\vartheta$, choose the new rank $r_1 \leq 2r$ as the minimal number such that
\begin{align*}
\left(\sum_{j=r_1+1}^{2r} \sigma_j^2\right)^{1/2} \leq \vartheta.
\end{align*}
Let $\mathbf{S}^{\text{rem}}$ be the $r_1 \times r_1$ diagonal matrix with the $r_1$ largest singular values and let $\mathbf{U}^{\text{rem}}$ and $\mathbf{W}^{\text{rem}}$ contain the first $r_1$ columns of $\mathbf{U}$ and $\mathbf{W}$, respectively. Set $\mathbf{X}^{\text{rem}} = \mathbf{\widetilde X}^{\text{rem}} \mathbf{U}^{\text{rem}}$ and $\mathbf{V}^{\text{rem}} = \mathbf{\widetilde V}^{\text{rem}} \mathbf{W}^{\text{rem}}$.

\item Set $\mathbf{\widehat X} = [\mathbf{X}^{\text{cons}}, \mathbf{X}^{\text{rem}}]$ and $\mathbf{\widehat V} = [\mathbf{e}_1, \mathbf{V}^{\text{rem}}]$. Perform a QR-decomposition of $\mathbf{\widehat X} = \mathbf{X}^1 \mathbf{R}^1$ and $\mathbf{\widehat V} = \mathbf{V}^1 \mathbf{R}^2$.

\item Set
\begin{align*}
\mathbf{S}^1 = \mathbf{R}^1 \begin{bmatrix}
\mathbf{S}^{\text{cons}} & 0 \\ 
0 & \mathbf{S}^{\text{rem}}
\end{bmatrix} \mathbf{R}^{2,\top}.
\end{align*}
The updated solution at time $t_1 = t_0+ \Delta t$ is then given by $\mathbf{u}^1 = \mathbf X^{1}\mathbf{S}^1\mathbf{V}^{1,\top}$.
\end{enumerate}

Then, the scheme is conservative:
\begin{theorem}
The scheme \eqref{eq:schemeStable} is locally conservative. That is, for the scalar flux at time $t_n$ denoted by $\Phi_j^n = X_{j\ell}^nS_{\ell m}^{n} V_{0m}^n$, where $n\in\{0,1\}$ and $u_{jk}^0 = X_{j\ell}^0S_{\ell m}^{0} V_{km}^0$ it fulfills the conservation law
\begin{subequations}\label{eq:localConservation}
\begin{align}
    \Phi^1_j =&\, \Phi^0_j - \Delta tD^x_{ji} u_{i\ell}^0 A_{0\ell}+\Delta tD^{xx}_{ji}u_{i\ell}^0 |A|_{0\ell} + \sigma\Delta t(B_j^{1}-\Phi^1_j),\\
    B_j^{1} =&\, B_j^0 + \sigma\Delta t(\Phi^1_j-B_j^{1}).
\end{align}
\end{subequations}
\end{theorem}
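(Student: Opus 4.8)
The plan is to reduce everything to a single claim, namely that the scalar flux $\Phi_j^1$ of the final truncated solution equals the auxiliary zeroth moment $\widehat u_{j0}^1$ computed in \eqref{eq:schemeStableu0}; once this is known, the two lines of \eqref{eq:localConservation} are nothing but \eqref{eq:schemeStableu0} and \eqref{eq:schemeStableB} rewritten with $\widehat u_{j0}^n$ replaced by $\Phi_j^n$.

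First I would simplify the transport terms in \eqref{eq:schemeStableu0}. Writing out $\widetilde S_{\alpha\beta}^0 = X_{j\alpha}^\star X_{j\ell}^0 S_{\ell m}^0 V_{km}^0 V_{k\beta}^\star$ and using that $\mathbf X^0$ lies in the span of $\mathbf X^\star$ and $\mathbf V^0$ in the span of $\mathbf V^\star$ (both bases are obtained by augmentation with the old basis), the projectors $\mathbf X^\star\mathbf X^{\star\top}$ and $\mathbf V^\star\mathbf V^{\star\top}$ act as the identity, so that $X_{in}^\star \widetilde S_{nm}^0 V_{\ell m}^\star = u_{i\ell}^0$; moreover the leading term $X_{j\ell}^0 S_{\ell m}^0 V_{0m}^0$ is exactly $\Phi_j^0$. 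With these identifications \eqref{eq:schemeStableu0} reads
\begin{align*}
\widehat u_{j0}^1 = \Phi_j^0 - \Delta t D^x_{ji} u_{i\ell}^0 A_{0\ell} + \Delta t D^{xx}_{ji} u_{i\ell}^0 |A|_{0\ell} + \sigma\Delta t(B_j^1 - \widehat u_{j0}^1),
\end{align*}
which is the first line of \eqref{eq:localConservation} modulo the identity $\Phi_j^1 = \widehat u_{j0}^1$.

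To establish that identity I would track the zeroth velocity mode $\mathbf e_1$ through the coupling step and the conservative truncation. The two structural facts I would use are that $\mathbf e_1$ is retained as the first column of $\mathbf{\widetilde V}^1$ by \eqref{eq:schemeStableAugmentu0}, so $\mathbf{\widetilde V}^{1,\top}\mathbf e_1$ is the first reduced unit vector, and that $\mathbf{\widehat u}_0^1$ lies in the span of $\mathbf{\widetilde X}^1$, again by \eqref{eq:schemeStableAugmentu0}. Multiplying \eqref{eq:schemeStableScorrect} on the right by $\mathbf e_1$, the factor $(\mathbf I - \mathbf e_1\mathbf e_1^\top)$ annihilates the entire scattering contribution, leaving only $\mathbf{\widetilde X}^{1,\top}\mathbf{\widehat u}_0^1$; hence the pre-truncation solution satisfies $\mathbf{\widetilde X}^1\mathbf{\widetilde S}^1\mathbf{\widetilde V}^{1,\top}\mathbf e_1 = \mathbf{\widetilde X}^1\mathbf{\widetilde X}^{1,\top}\mathbf{\widehat u}_0^1 = \mathbf{\widehat u}_0^1$.

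Finally I would check that the conservative truncation does not disturb this moment. Since $\mathbf{\widetilde K}^{\text{cons}}$ is the first column of $\mathbf{\widetilde X}^1\mathbf{\widetilde S}^1$, the previous computation gives $\mathbf{\widetilde K}^{\text{cons}} = \mathbf{\widehat u}_0^1$, and carrying $\mathbf X^{\text{cons}}\mathbf S^{\text{cons}} = \mathbf{\widetilde K}^{\text{cons}}$ together with $\mathbf{\widehat V} = [\mathbf e_1, \mathbf V^{\text{rem}}]$ through Steps 5--6 of the truncation yields $\mathbf u^1 = \mathbf X^{\text{cons}}\mathbf S^{\text{cons}}\mathbf e_1^\top + \mathbf X^{\text{rem}}\mathbf S^{\text{rem}}\mathbf V^{\text{rem},\top}$; because $\mathbf V^{\text{rem}}$ is orthogonal to $\mathbf e_1$, right-multiplication by $\mathbf e_1$ gives $\mathbf u^1\mathbf e_1 = \mathbf{\widehat u}_0^1$, i.e. $\Phi_j^1 = \widehat u_{j0}^1$. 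The main obstacle is exactly this bookkeeping: one must verify that the zeroth mode survives both the rank-one corrected Galerkin update \eqref{eq:schemeStableScorrect} and the split-and-retruncate procedure, while every other component is altered by scattering and by truncation. Substituting $\Phi_j^1 = \widehat u_{j0}^1$ into the reduced forms of \eqref{eq:schemeStableu0} and \eqref{eq:schemeStableB} then closes the proof.
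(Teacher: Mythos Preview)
Your proposal is correct and follows the same approach as the paper: establish $\Phi_j^1 = \widehat u_{j0}^1$ by tracking the zeroth mode through the augmentation \eqref{eq:schemeStableAugmentu0}, the correction \eqref{eq:schemeStableScorrect}, and the conservative truncation, and then use the augmentation identity $X_{in}^{\star}\widetilde S_{nm}^{0}V_{\ell m}^{\star}=u_{i\ell}^0$ to rewrite \eqref{eq:schemeStableu0}--\eqref{eq:schemeStableB} as \eqref{eq:localConservation}. The paper's proof states these two ingredients tersely, whereas you spell out the bookkeeping explicitly; no substantive difference.
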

\begin{proof}
The conservatice truncation step is designed such that it does not alter the first column of $ \mathbf{\widetilde X}^1 \mathbf{\widetilde S}^1 \mathbf{\widetilde V}^{1,\top}$. Together with the basis augmentation \eqref{eq:schemeStableAugmentu0} and correction step \eqref{eq:schemeStableScattering} we then know that 
\begin{align*}
    \Phi^1_j = X_{j\ell}^1 S_{\ell m}^1 V_{0m}^1 = \widetilde X_{j\ell}^1 \widetilde S_{\ell m}^1\widetilde V_{0m}^1 = \widehat u_{j0}^1.
\end{align*}
Hence, with \eqref{eq:schemeStableu0} and \eqref{eq:schemeStableB} we get that
\begin{align*}
    \Phi^1_j =&\, X_{j\ell}^{0}S_{\ell m}^{0} V_{0m}^{0} - \Delta tD^x_{ji}X_{in}^{\star} \widetilde S_{nm}^{0} V_{\ell m}^{\star} A_{0\ell}+\Delta tD^{xx}_{ji}X_{in}^{\star} \widetilde S_{nm}^{0} V_{\ell m}^{\star} |A|_{0\ell}\\
    &+ \sigma\Delta t(B_j^{1}-\Phi^1_j),\\
    B_j^{1} =&\, B_j^0 + \sigma\Delta t(\Phi^1_j-B_j^{1}).
\end{align*}
Since the basis augmentation with $\mathbf{X}^0$ and $\mathbf{V}^0$ ensures $X_{j\ell}^{0}S_{\ell m}^{0} V_{0m}^{0} = X_{in}^{\star} \widetilde S_{nm}^{0} V_{\ell m}^{\star} = u_{i\ell}^0$, the local conservation law \eqref{eq:localConservation} holds.
\end{proof}

Hence, equipped with a conservative truncation step, the energy stable algorithm presented in \eqref{eq:schemeStable} conserves mass locally. To give an overview of the algorithm, we visualize the main steps in Figure~\ref{fig:flowchart}. 
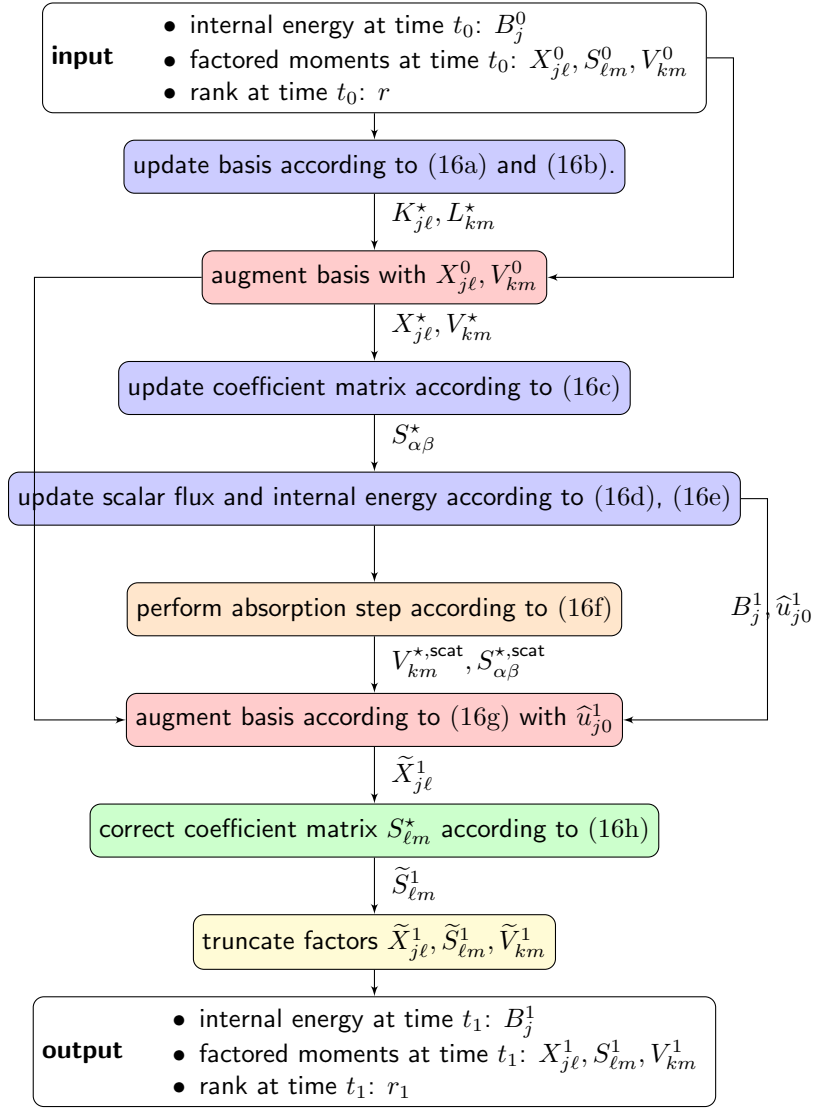
\begin{figure}[htp!]
    \centering
    \begin{tikzpicture}[node distance = 3.5cm,auto,font=\sffamily]
        \node[block](init){
        \textbf{input}
    \begin{varwidth}{\linewidth}\begin{itemize}
        \item internal energy at time $t_0$: $B_j^0$
        \item factored moments at time $t_0$:  $X^0_{j\ell},S^0_{\ell m}, V^0_{km}$
        \item rank at time $t_0$: $r$
    \end{itemize}\end{varwidth}
        };
        \node[block, below = 0.35cm of init, fill=blue!20](KLstep){update basis according to \eqref{eq:schemeStableK} and \eqref{eq:schemeStableL}.};
        \node[block, below = 0.75cm of KLstep,fill=red!20](augment1){augment basis with $X^0_{j\ell}, V^0_{km}$};
        \node[block, below = 0.75cm of augment1,fill=blue!20](SStep){update coefficient matrix according to \eqref{eq:schemeStableS}};
        \node[block, below = 0.75cm of SStep,fill=blue!20](u0Bupdate){update scalar flux and internal energy according to \eqref{eq:schemeStableu0}, \eqref{eq:schemeStableB}};
        \node[block, below = 0.75cm of u0Bupdate,fill=orange!20](absorption){perform absorption step according to \eqref{eq:schemeStableScattering}};
        \node[block, below = 0.75cm of absorption,fill=red!20](augment2){augment basis according to \eqref{eq:schemeStableAugmentu0} with $\widehat u_{j0}^1$};
        \node[block, below = 0.75cm of augment2,fill=green!20](Scorrection){correct coefficient matrix $S_{\ell m}^{\star}$ according to \eqref{eq:schemeStableScorrect}};
        \node[block, below = 0.75cm of Scorrection,fill=yellow!20](truncate){truncate factors $\widetilde X^1_{j\ell},\widetilde S^1_{\ell m},\widetilde V^1_{km}$};
        \node[block, below = 0.35cm of truncate](out){
        \textbf{output}
    \begin{varwidth}{\linewidth}\begin{itemize}
        \item internal energy at time $t_1$: $B_j^1$
        \item factored moments at time $t_1$:  $X^1_{j\ell},S^1_{\ell m},V^1_{km}$
        \item rank at time $t_1$: $r_1$
    \end{itemize}\end{varwidth}
        };
        \path[line] (init) -- node [near end] {} (KLstep);
        \path[line] (KLstep) -- node [near end] {} (augment1);
        \path[line] (augment1) -- node [near end] {} (SStep);
        \path[line] (init.east) -- ([xshift=0.35cm] init.east) |- (augment1.east);
        \path[line] (augment1.west) -- ([xshift=-2.2cm] augment1.west) |- (augment2.west);
        \path[line] (SStep) -- (u0Bupdate);
        \path[line] (u0Bupdate) -- node [near end] {} (absorption);
        \path[line] (u0Bupdate.east) -- ([xshift=0.35cm] u0Bupdate.east) |- (augment2.east);
        \node[right=0.0cm and 1.3cm of absorption.east](test){$B^{1}_{j}, \widehat u^{1}_{j0}$};
        \path[line] (absorption) -- node [near end] {} (augment2);
        \path[line] (augment2) -- (Scorrection);
        \path[line] (Scorrection) -- node [near start] {} (truncate);
        \path[line] (truncate) -- node [near start] {} (out);
        \node[below right=0.0cm and 0.1cm of augment1.south](test){$X^{\star}_{j\ell},V^{\star}_{km}$};
        \node[below right=0.0cm and 0.1cm of SStep.south](test){$S^{\star}_{\alpha \beta}$};
        \node[below right=0.0cm and 0.1cm of KLstep.south](test){$K^{\star}_{j\ell}, L^{\star}_{km}$};
        \node[below right=0.0cm and 0.1cm of absorption.south](test){$V_{km}^{\star, \text{scat}}, S_{\alpha \beta}^{\star, \text{scat}} $};
        \node[below right=0.0cm and 0.1cm of augment2.south](test){$\widetilde X_{j\ell}^1$};
        \node[below right=0.0cm and 0.1cm of Scorrection.south](test){$\widetilde S_{\ell m}^1$};
    \end{tikzpicture}
    \caption{Flowchart of the stable and conservative method \eqref{eq:schemeStable}.}
    \label{fig:flowchart}
\end{figure}

\section{Numerical results}\label{sec:num}

In this section we give numerical results to validate the proposed DLRA algorithm. The source code to reproduce the presented numerical results is openly available, see \cite{baumann2023}.

\subsection{1D Plane source}

We consider the thermal radiative transfer equations as described in \eqref{eq1a} on the spatial domain $D = [-10,10]$. As initial distribution we choose a cutoff Gausian
\begin{align*}
u(t=0,x) = \max \left(10^{-4}, \frac{1}{\sqrt{2\pi\sigma_{\mathrm{IC}}^2}} \exp\left(-\frac{(x-1)^2}{2\sigma_{\mathrm{IC}}^2}\right)\right),
\end{align*}
with constant deviation $\sigma_{\mathrm{IC}}=0.03$. Particles are initially centered around $x=1$ and move into all directions $\mu\in [-1,1]$. The initial value for the internal energy is set to $B^0=1$ and we start computations with a rank of $r=20$. The opacity $\sigma$ is set to the constant value of $1$. Note that this setting is an extension of the so-called \textit{plane source} problem, which is a common test case for the radiative transfer equation \cite{ganapol2008}. In the context of dynamical low-rank approximation it has been studied in \cite{ceruti2022rank, kusch2023stability, peng2021high, peng2020-2D}. We compare the solution of the full coupled-implicit system without DLRA which reads
\begin{subequations}\label{eq:full}
\begin{align}
 u_{jk}^1 =& u_{jk}^0-\Delta tD^x_{ji}u_{i\ell}^0 A_{k\ell}+\Delta tD^{xx}_{ji}u_{i\ell}^0 |A|_{k\ell} + \sigma\Delta t(B_j^1\delta_{k0}-u_{jk}^1)\\
B_j^1 =& B_j^0 + \sigma\Delta t(u_{j0}^1-B_j^1)
\end{align}
\end{subequations}
to the presented energy stable mass conservative DLRA solution from \eqref{eq:schemeStable}. We refer to \eqref{eq:full} as the full system. The total mass at any time $t_n$ shall be defined as $m^n = \Delta x \sum_j \left(u_{j0}^n + B_j^n\right)$. As computational parameters we use $n_x = 1000$ cells in the spatial domain and $N=500$ moments to represent the directional variable. The time step size is chosen as $\Delta t = \text{CFL} \cdot \Delta x$ with a CFL number of $\text{CFL} = 0.99$. In Figure \ref{fig:Planesource} we present computational results for the solution
$f(x,\mu)$, the scalar flux $\Phi = \langle f\rangle_{\mu}$ and the temperature $T$ at the end time $t_{\text{end}}=8$. Further, the evolution of the rank $r$ in time, and the relative mass error $\frac{|m^0-m^n|}{\Vert m^0\Vert}$ are shown. One can observe that the DLRA scheme captures well the behaviour of the full system. For a chosen tolerance of $\vartheta = 10^{-1} \Vert \mathbf{\Sigma} \Vert_2$ the rank increases up to $r=24$ before it reduces again. The relative mass error is of order $\mathcal{O}(10^{-14})$. Hence, our proposed scheme is mass conservative up to machine precision. 

\begin{figure}[h!]
    \centering
    \includegraphics[width = 1.0\linewidth]{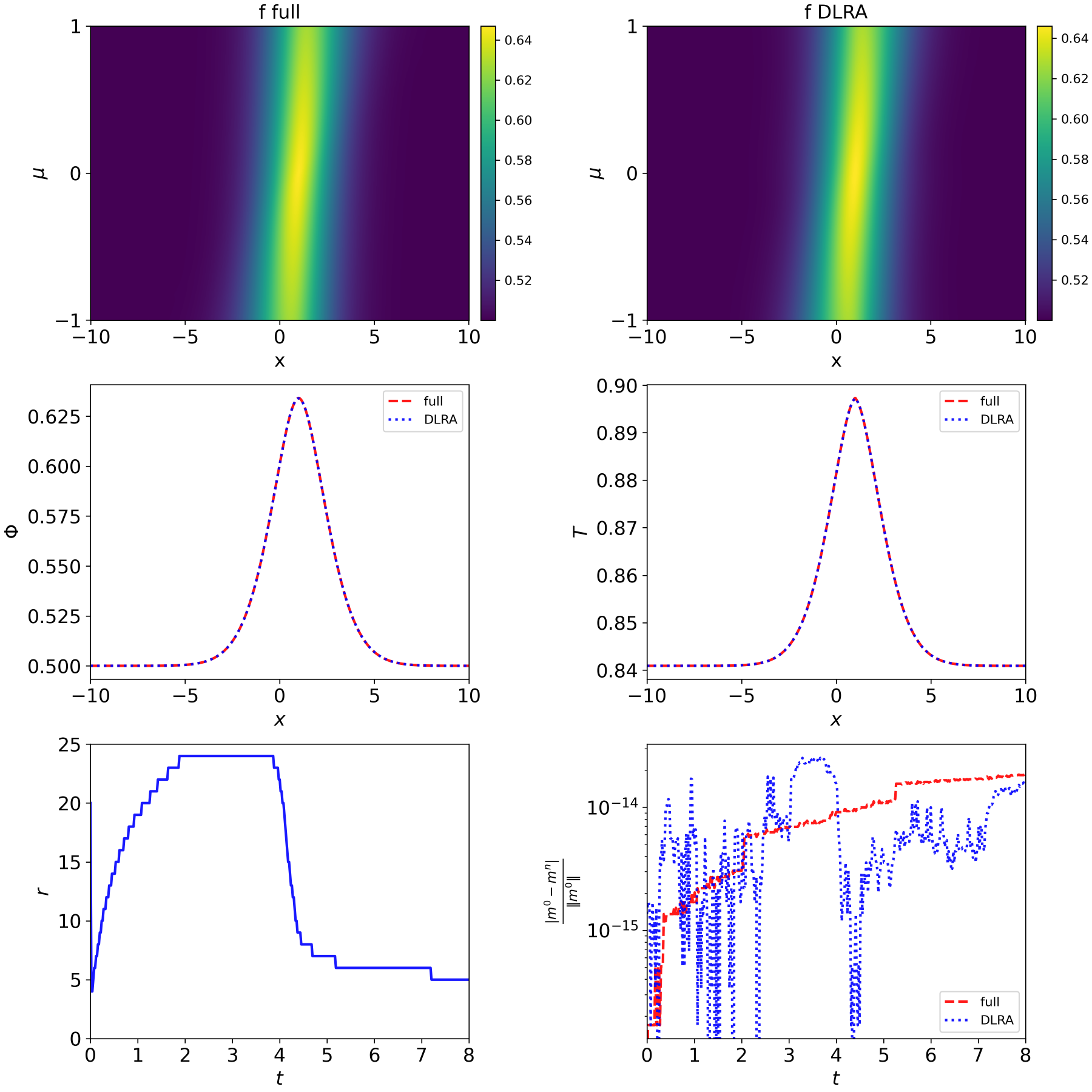}
   \caption{Top row: Numerical results for the solution $f(x,\mu)$ of the plane source problem at time $t_{\text{end}}=8$ computed with the full coupled-implicit system (left) and the DLRA system (right). Middle row: Travelling particle (left) and heat wave (right) for both the full system and the DLRA system. Bottom row: Evolution of the rank in time for the DLRA method (left) and relative mass error compared for both methods (right).}
    \label{fig:Planesource}
\end{figure}

\subsection{1D Su-Olson problem}

For the next test problem we add a source term $Q(x)$ to the previously investigated equations leading to
\begin{subequations}
\begin{align*}
\partial_t f(t,x,\mu) + \mu\partial_x f(t,x,\mu) &= \sigma(B(t,x)-f(t,x,\mu))+ Q(x),\\
\partial_t B(t,x) &= \sigma(\langle f(t,x,\cdot)\rangle_{\mu}-B(t,x)).
\end{align*}
\end{subequations}
In our example we use the source function $Q(x) = \chi_{[-0.5,0.5]}(x)/a$ with $a= \frac{4 \sigma_{\mathrm{SB}}}{c}$ being the radiation constant. Again we 
consider the spatial domain $D = [-10,10]$ and choose the initial condition 
\begin{align*}
u(t=0,x) = \max \left(10^{-4}, \frac{1}{\sqrt{2\pi\sigma_{\mathrm{IC}}^2}} \exp\left(-\frac{(x-1)^2}{2\sigma_{\mathrm{IC}}^2}\right)\right),
\end{align*}
with constant deviation $\sigma_{\mathrm{IC}}=0.03$ and particles moving into all directions $\mu \in [-1,1]$. The initial value for the internal energy is set to $B_0 = 50$, the initial value for the rank to $r=20$. The opacity $\sigma$ is again chosen to have the constant value of $1$. As computational parameters we use $n_x = 1000$ cells in the spatial domain and $N=500$ moments to represent the directional variable. The time step size is chosen as $\Delta t = \text{CFL} \cdot \Delta x$ with a CFL number of $\text{CFL} = 0.99$. The isotropic source term generates radiation particles flying through and interacting with a background material. The interaction is driven by the opacity $\sigma$. In turn, particles heat up the material leading to a travelling temperature front, also called a \textit{Marshak wave} \cite{marshak1958}. Again this travelling heat wave can lead to the emission of new particles from the background material generating a particle wave. At a given time point $t_{\text{end}}= 3.16$ this waves can be seen in Figure \ref{fig:SuOlson} where we display numerical results for the solution $f(x,\mu)$, the scalar flux $\Phi = \langle f\rangle_{\mu}$ and the temperature $T$. We compare the solution of the full coupled-implicit system differing from \eqref{eq:full} by an additional source term to the presented energy stable mass conservative DLRA solution from \eqref{eq:schemeStable} where we have also added this source term. Further, the evolution of the rank in time is presented for a tolerance parameter of $\vartheta = 10^{-2} \Vert \mathbf{\Sigma} \Vert_2$. Again we observe that the proposed DLRA scheme approximates well the behaviour of the full system. In addition, a very low rank is sufficient to obtain accurate results.
Note that due to the source term there is no mass conservation in this example.

\begin{figure}[h!]
    \centering
    \includegraphics[width = 1.0\linewidth]{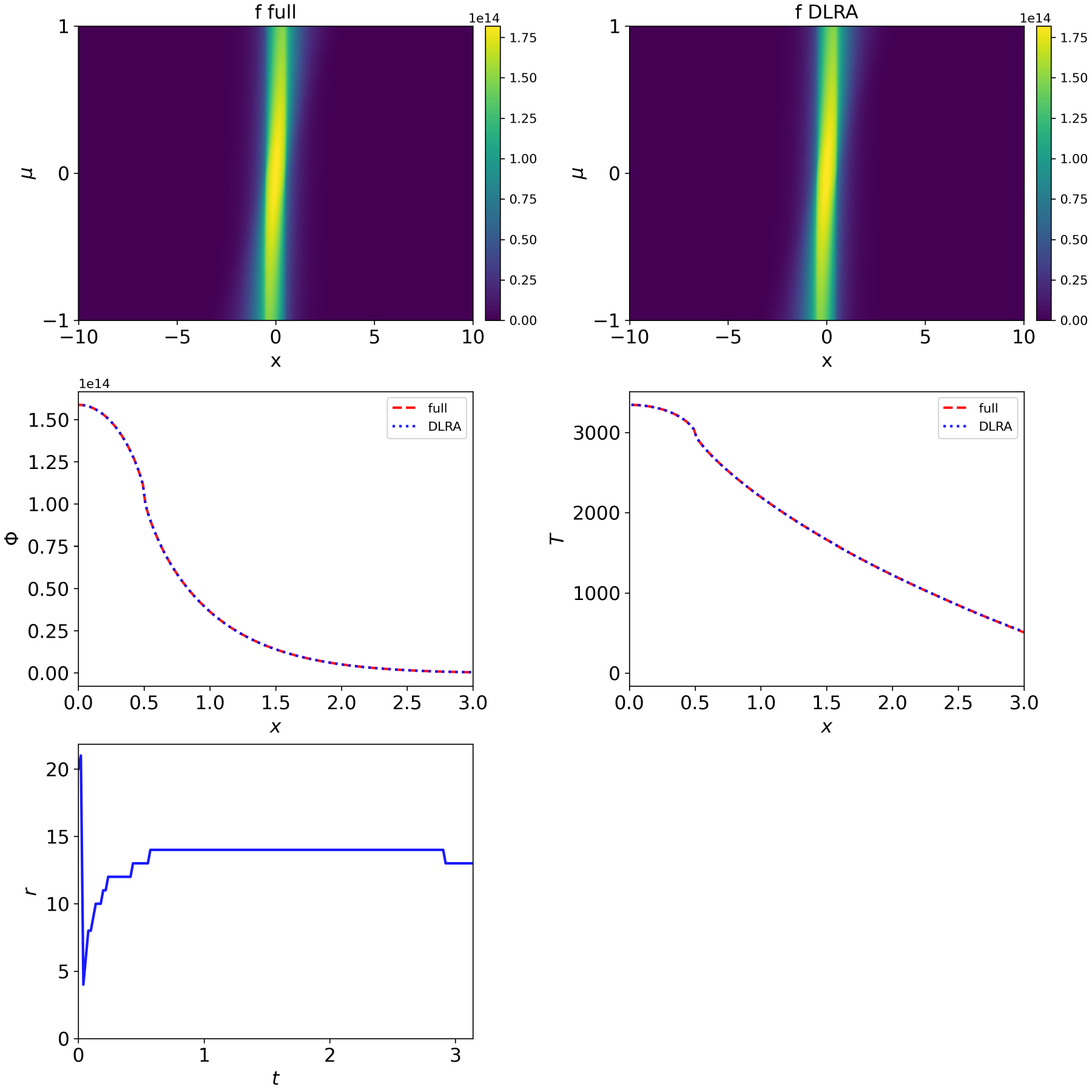}
    \caption{Top row: Numerical results for the solution $f(x,\mu)$ of the Su-Olson problem at time $t_{\text{end}}=3.16$ computed with the full coupled-implicit system (left) and the DLRA system (right). Middle row: Travelling particle (left) and heat wave (right) for both the full system and the DLRA system. Bottom row: Evolution of the rank in time for the DLRA method.}
    \label{fig:SuOlson}
\end{figure}

\subsection{2D Beam}

To approve computational benefits of the presented method we extend it to a two-dimensional setting. The set of equations becomes:
\begin{align*}
\partial_t f(t,\mathbf{x},\mathbf{\Omega}) + \mathbf{\Omega} \cdot \nabla_\mathbf{x} f(t,\mathbf{x},\mathbf{\Omega}) &= \sigma(B(t,\mathbf{x})-f(t,\mathbf{x},\mathbf{\Omega})),\\
\partial_t B(t,\mathbf{x}) &= \sigma(\langle f(t,\mathbf{x},\cdot)\rangle_{\mathbf{\Omega}}-B(t,\mathbf{x})).
\end{align*}
For the numerical experiments let $\mathbf{x}=(x_1,x_2) \in [-1,1] \times [-1, 1], \mathbf{\Omega} =(\Omega_1, \Omega_2, \Omega_3) \in \mathcal{S}^2$ and $\sigma=0.5$. The initial condition of the two-dimensional beam is given by
\begin{align*}
    f(t=0,\mathbf{x},\mathbf{\Omega}) = 10^6\cdot \frac{1}{2\pi\sigma_x^2}\mathrm{exp}\left(-\frac{\Vert \mathbf{x} \Vert^2}{2\sigma_x^2}\right) \cdot \frac{1}{2\pi\sigma_{\Omega}^2}\mathrm{exp}\left(-\frac{(\Omega_1 - \Omega^{\star} )^2 + (\Omega_3 - \Omega^{\star} )^2}{2\sigma_{\Omega}^2}\right),
\end{align*}
with $\Omega^{\star} = \frac1{\sqrt{2}}$, $\sigma_x = \sigma_{\Omega} = 0.1$. The initial value for the internal energy is set to $B^0=1$, the initial value for the rank to $r=100$. The total mass at any time $t_n$ shall be defined as $m^n = \Delta x_1 \Delta x_2 \sum_j \left(u_{j0}^n + B_j^n \right)$. We perform our computations on a spatial grid with $N_{\text{CellsX}}=500$ points in $x_1$ and $N_{\text{CellsY}}=500$ points in $x_2$. For the angular basis we use again a modal approach, namely the spherical harmonics ($P_N$) method. Technical details can be found in \cite{casezweifel1967, mcclarren2008-2, mcclarrenbrunner2010}, whereas \cite{peng2020-2D, kusch2021robust} relates the method to dynamical low-rank approximation. The polynomial degree shall be chosen large enough such that the behaviour is captured correctly but small enough to stay in a reasonable computational regime. An increasing order of unknowns usually leads to an increasing complexity and therefore to the need of a higher polynomial degree. For our example we use a polynomial degree of $n_{\text{PN}}=29$ corresponding to $900$ expansion coefficients in angle. The time step size is chosen as $\Delta t = \text{CFL} \cdot \Delta x$ with a CFL number of $\text{CFL}=0.7$.
%The following computational parameters are used:
%\begin{table}[H]
%\centering
%\begin{tabular}{|l|l|}
%\hline
%$n_{\text{PN}} = 29$   &  degree of polynomials \\
%$N_{\text{CellsX}} = 500$     &  number of spatial cells in $x_1$ \\
%$N_{\text{CellsY}} = 500$     &  number of spatial cells in $x_2$ \\
%$t_{\text{end}} = 0.5$ & end time\\
%\hline
%\end{tabular}
%\end{table}
We compare the solution of the two-dimensional full system corresponding to \eqref{eq:full} to the two-dimensional DLRA solution corresponding to \eqref{eq:schemeStable}. The extension to two dimensions is straightforward. In Figure \ref{fig:2DBeam-1} we show numerical results for the scalar flux $\Phi = \int_{\mathcal{S}^2} f(t,\mathbf{x}, \cdot) \, \mathrm{d} \mathbf{\Omega}$ and the temperature $T$ at the time $t=0.5$. We again observe the accuracy of the proposed DLRA scheme. For this setup the computational benefit of the DLRA method is significant as the run time compared to the solution of the full problem is reduced by a factor of approximately $8$ from $20023$ seconds to $2509$ seconds. For the evolution of the rank $r$ in time and the relative mass error $\frac{|m^0-m^n|}{\Vert m^0\Vert}$ we consider a time interval up to $t=1.5$. In Figure \ref{fig:2DBeam-2} one can observe that for a chosen tolerance parameter of $\vartheta = 5 \cdot 10^{-4} \Vert \mathbf{\Sigma} \Vert_2$ the rank increases but does not approach its allowed maximal value of $100$. Further, the relative mass error stagnates and the DLRA method shows its mass conservation property.

\begin{figure}[h!]
    \centering
    \includegraphics[width = 0.9\linewidth]{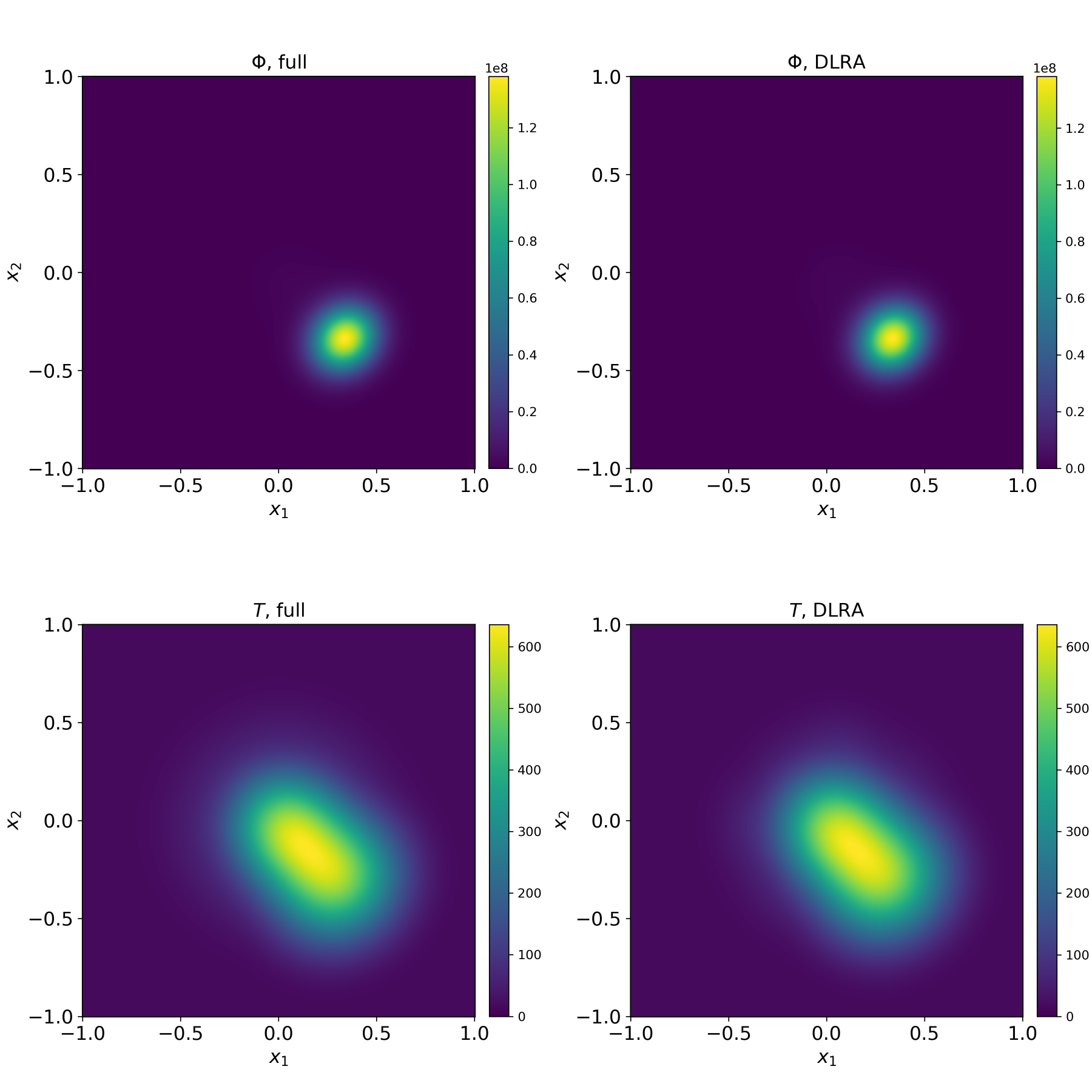}
    \caption{Numerical results of the scalar flux and the temperature for the 2D beam example for the full coupled-implicit system (left) and the DLRA system (right) at the time $t=0.5$. }
    \label{fig:2DBeam-1}
\end{figure}

\begin{figure}[h!]
    \centering
    \includegraphics[width = 0.95\linewidth]{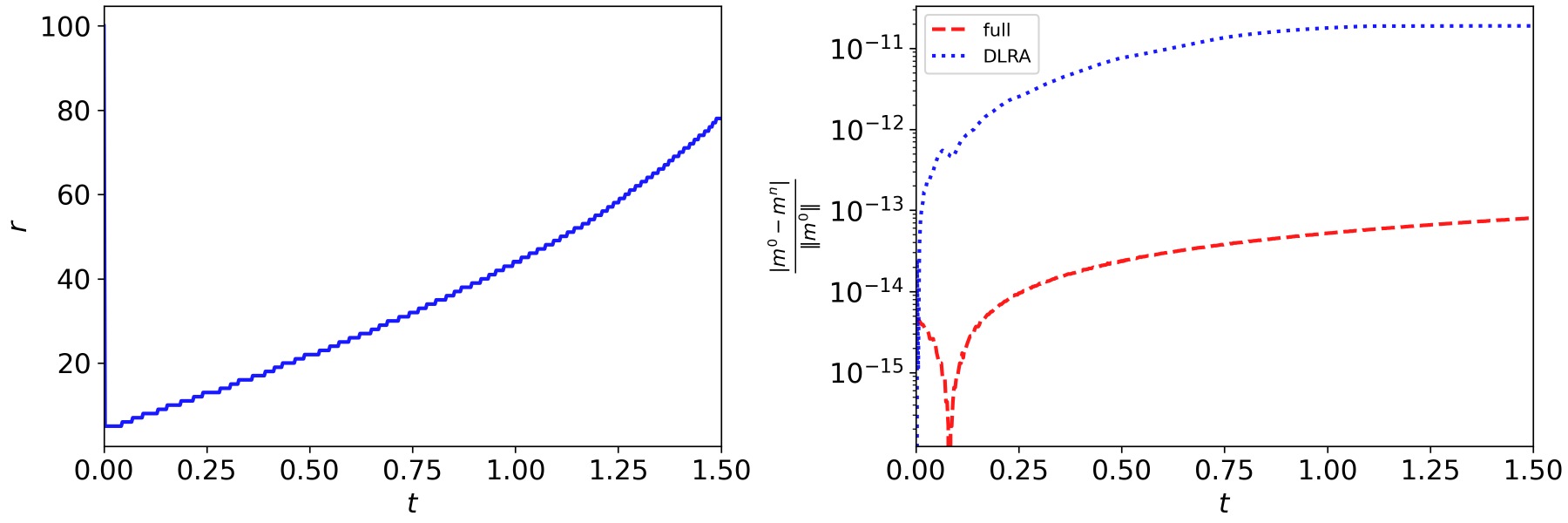}
    \caption{Evolution of the rank in time for the 2D beam example for the DLRA method (left) and relative mass error compared for both methods (right) until a  time of $t=1.5$.}
    \label{fig:2DBeam-2}
\end{figure}

\section{Conclusion and outlook}

We have introduced an energy stable and mass conservative dynamical low-rank algorithm for the Su-Olson problem. The key points leading to these properties consist in treating both equations in a coupled-implicit way and using a mass conservatice truncation strategy. Numerical examples both in 1D and 2D validate the accuracy of the DLRA method. Its efficiency compared to the solution of the full system can especially be seen in the two-dimensional setting. For future work, we propose to implement the parallel integrator of \cite{ceruti2023parallel} for further enhancing the efficiency of the DLRA method. Moreover, we expect to draw conclusions from this Su-Olson system to the Boltzmann-BGK system and the DLRA algorithm presented in \cite{einkemmer2021bgk} regarding stability and an appropriate choice of the size of the time step.\\ \vspace{
10cm}

\section*{Acknowledgements}

Lena Baumann acknowledges support by the Würzburg Mathematics Center for Communication and Interaction (WMCCI) as well as the Stiftung der Deutschen Wirtschaft. The work of Jonas Kusch was funded by the Deutsche Forschungsgemeinschaft (DFG, German Research Foundation) – 491976834.

\section*{Author Contribution Statement (CRediT)}
\vspace{5pt}
{\small
\noindent
\begin{tabular}{@{}lp{10.8cm}}
\textbf{Lena Baumann:} & analysis of energy stability, conceptualization, implementation, plotting, revision, simulation of numerical tests, validation, visualization, writing - original draft \\
\textbf{Lukas Einkemmer:} & analysis of energy stability, conceptualization, initial idea of numerical scheme, proofreading and corrections, supervision\\
\textbf{Christian Klingenberg:} & conceptualization, proofreading and corrections, supervision\\
\textbf{Jonas Kusch:} & analysis of energy stability, conceptualization, implementation, initial idea of numerical scheme, simulation/setup of numerical tests, supervision, visualization, writing - original draft\\
\end{tabular}
}

\newpage
\bibliographystyle{abbrv}
\bibliography{main} 
\end{document}